\newtheorem{theorem}{Theorem}[section]
\newtheorem{lemma}[theorem]{Lemma}
\newtheorem*{lemma*}{Lemma}
\newtheorem{corollary}[theorem]{Corollary}
\theoremstyle{definition}
\theoremstyle{remark}
\newtheorem*{remark}{Remark}
\numberwithin{equation}{section}
\DeclareMathOperator{\var}{Var}
\DeclareMathOperator{\bottom}{bot}
\DeclareMathOperator{\re}{Re}
\DeclareMathOperator{\im}{Im}
\DeclareMathOperator{\LIL}{LIL}
\DeclareMathOperator{\bs}{\mathbf S}
\DeclareMathOperator*{\esssup}{ess\,sup}
\DeclareMathOperator{\Hdim}{H.dim }
\DeclareMathOperator{\Mdim}{M.dim }
\DeclareMathOperator{\ei}{I}
\DeclareMathOperator{\bel}{dil. }
\DeclareMathOperator{\bH}{\mathbf{H}}
\DeclareMathOperator{\bS}{\mathbf{S}}
\DeclareMathOperator{\bl}{B}
\DeclareMathOperator{\fuchs}{F}
\DeclareMathOperator{\good}{good}
\DeclareMathOperator{\bad}{bad}
\def\XXint#1#2#3{{\setbox0=\hbox{$#1{#2#3}{\int}$}
\vcenter{\hbox{$#2#3$}}\kern-.5\wd0}}
\def\le{\leqslant}
\def\ge{\geqslant}
\begin{document}
\baselineskip6mm
\vskip0.4cm
\title{On Makarov's principle in conformal mapping}

\author[O. Ivrii]{Oleg Ivrii}
\address{Department of Mathematics and Statistics, University of Helsinki,
         P.O. Box 68, FIN-00014, Helsinki, Finland}
\email{oleg.ivrii@helsinki.fi}


\subjclass[2010]{Primary 30C62; Secondary 30H30}

\thanks{
The author was supported by the
Academy of Finland, project no.~271983.}

\begin{abstract}
We examine several characteristics of conformal maps that resemble the variance
of a Gaussian: asymptotic variance, the constant in Makarov's law of iterated logarithm and the second derivative of the integral means spectrum at the origin.
While these quantities need not be equal in general, they agree for domains whose boundaries are regular fractals such as Julia sets or limit sets of quasi-Fuchsian groups. We give a new proof of these dynamical equalities.
We also show that these characteristics have the same universal bounds and prove a central limit theorem for extremals.
Our method is based on analyzing the local variance of dyadic martingales associated to Bloch functions.
\end{abstract}

\maketitle

\section{Introduction}\label{se:introduction}

Let $\Omega \subset \mathbb{C}$ be a simply connected domain in the plane whose boundary is a Jordan curve.
In 1985, N.~Makarov \cite{makarov85} introduced probabilistic techniques into the theory of conformal mapping to show that the harmonic measure on 
$\partial \Omega$ necessarily has Hausdorff dimension 1.
This is quite surprising for domains $\Omega$ with $\Hdim \partial \Omega > 1$: for such domains, Makarov's theorem suggests that Brownian motion started at an interior point $z_0 \in \Omega$ only hits a small subset of the boundary. In probabilistic terms, the above result may be viewed as analogue of the {\em law of large numbers} for random variables.

In order to obtain finer information about the metric properties of the harmonic measure, Makarov proved a {\em law of iterated logarithm} for Bloch functions.
Loosely speaking, Makarov's work suggests that conformal maps (at least to nice fractal domains) resemble Gaussians:
$$
\mathcal N_{\mu, \sigma^2}(t) = \frac{1}{\sigma \sqrt{2\pi}} \int_{-\infty}^t \exp \biggl (- \frac{(x-\mu)^2}{2\sigma^2} \biggr ) dx.
$$
A Gaussian is specified by two parameters: its mean $\mu$ and variance $\sigma^2$. The variance $\sigma^2$ may be extracted in
several ways: through the {\em central limit theorem} (CLT), the {\em law of the iterated logarithm} (LIL) or via 
{\em exponential integrability} estimates. These notions lead to several different characteristics of conformal maps $f: \mathbb{D} \to \mathbb{C}$. 
It is easier to define these characteristics in terms of the associated Bloch functions  $b_f := \log f'.$ As is well-known, each function $b_f$ arising in this way,  satisfies a bound of the form $$\|b_f\|_\mathcal B := \sup_{z \in \mathbb{D}} |f'(z)|(1-|z|^2) \le 6.$$
We recall the definitions:
\begin{itemize}
\item The {\em asymptotic variance}
\begin{equation}
\label{eq:av}
\sigma^2(b) = \limsup_{r\to1} \frac{1}{2\pi |\log(1-r)|} \int_{|z|=r} |b(z)|^2 \, |dz|.
\end{equation}
\item The {\em LIL constant}
\begin{equation}
\label{eq:lil}
C_{\LIL}(b) =  \esssup_{\theta \in [0,2\pi)} \ \Biggl\{
\limsup_{r \to 1} \frac{ |b(re^{i\theta})|}{\sqrt{\log \frac{1}{1-r} \log \log\log  \frac{1}{1-r}}} \Biggr\}.
\end{equation}
\item The {\em integral means spectrum}
\begin{equation}
\beta_b(\tau) = \limsup_{r \to 1} \frac{1}{ |\log(1-r)|} \cdot \log \int_{|z|=r} \bigl |e^{\tau b(z)} \bigr | \, |dz|, \qquad \tau \in \mathbb{C}.
\end{equation}
\end{itemize}

As hinted above, in dynamical situations, the above characteristics are linked by an explicit relation:

\begin{theorem} 
\label{dynamical-connections}
Suppose $f(z)$ is a conformal map, such that the image of the unit circle $f(\mathbb{S}^1)$ is a Jordan curve, invariant under a hyperbolic conformal dynamical system.
Then, 
\begin{equation}\label{eq:1234}
2 \frac{d^2}{d\tau^2}\biggl |_{\tau=0} \beta_{\log f'}(\tau) = \sigma^2(\log f') = C^2_{\LIL}(\log f').
\end{equation}
\end{theorem}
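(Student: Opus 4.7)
My plan is to encode $f$ and the dynamics as a Hölder cocycle over an expanding Markov map of $\mathbb{S}^1$, and then identify each of the three quantities in \eqref{eq:1234} with the \emph{asymptotic dynamical variance} of that cocycle. Since $T:\partial\Omega\to\partial\Omega$ is conformal and expanding, the pullback $g:=f^{-1}\circ T\circ f$ extends to an expanding Markov map of $\mathbb{S}^1$, by the Carathéodory extension of $f$ (the boundary is a Jordan curve) together with Koebe distortion to transfer expansion. Let $m$ be Lebesgue measure on $\mathbb{S}^1$, which pushes forward to harmonic measure on $\partial\Omega$. With respect to the inverse-branch system of $g$, the potential $\log|f'|$ is Hölder, and the spectral gap of the associated Ruelle--Perron--Frobenius transfer operator ensures that the \emph{dynamical variance} $\sigma^2_{\mathrm{dyn}}$, i.e.\ the limiting normalized $L^2$-fluctuation rate of the Birkhoff sums of $\log|f'|-\int\log|f'|\,dm$, is finite, positive, and analytic in perturbations of the potential.

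To relate $\beta''_b(0)$ to $\sigma^2_{\mathrm{dyn}}$, I partition $\mathbb{S}^1$ into $g$-cylinders $C$ of generation $n$ (with $\diam(C)\asymp 2^{-n}$) and apply Koebe distortion to obtain
$$\int_{|z|=1-2^{-n}}|f'(z)|^\tau\,|dz|\;\asymp\;2^{-n}\sum_{C}\diam\bigl(f(C)\bigr)^\tau\;=\;2^{-n}\,Z_n(\tau),$$
where $Z_n(\tau)$ is the partition function of $\tau\log|f'|$ on the inverse-branch system. Its exponential growth rate is the pressure $P(\tau)$, real-analytic in $\tau$ by Ruelle--Perron--Frobenius theory, so $\beta_b(\tau)$ agrees with $P(\tau)/\log 2$ modulo an affine term in $\tau$. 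Combining $\beta_b(0)=0$ and Makarov's theorem $\beta'_b(0)=0$ with $P''(0)=\sigma^2_{\mathrm{dyn}}\log 2$ yields $2\beta''_b(0)=\sigma^2_{\mathrm{dyn}}$. The same Koebe comparison identifies $\frac{1}{2\pi|\log(1-r)|}\int_{|z|=r}|b|^2\,|dz|$ at $r=1-2^{-n}$ with the normalized second moment of a Birkhoff sum, and the classical CLT for hyperbolic systems gives $\sigma^2(b)=\sigma^2_{\mathrm{dyn}}$. For the LIL constant, the Philipp--Stout LIL for Hölder cocycles over expanding Markov systems, applied to the radial boundary Bloch data, gives
$$\limsup_{r\to1}\frac{|b(re^{i\theta})|}{\sqrt{\log\tfrac1{1-r}\,\log\log\log\tfrac1{1-r}}}=\sigma_{\mathrm{dyn}}\quad\text{for }m\text{-a.e.}\ \theta,$$
so ergodicity of $m$ converts the $\esssup$ in \eqref{eq:lil} into the value $\sigma_{\mathrm{dyn}}$, giving $C_{\LIL}(b)=\sigma_{\mathrm{dyn}}$.

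The chief technical obstacle is the \emph{uniform, $\tau$-smooth} Koebe comparison between Euclidean circle integrals and dyadic-cylinder partition sums. A bounded multiplicative error in $Z_n(\tau)$ becomes only an $o(1)$ additive error in pressure, which is harmless for $P(0)$ and $P'(0)$ but is precisely the scale at which $P''(0)$ lives; one must ensure that the error constants do not deteriorate as $\tau$ ranges in a neighborhood of $0$. Controlling this uniformly requires exploiting the hyperbolic expansion on cylinders of every generation together with the Hölder regularity of $\log|f'|$ on the inverse-branch system. It is also here that the hyperbolic dynamical assumption (as opposed to merely ``regular fractal'') is essential: without a spectral gap one loses both the analyticity of $P$ near $0$ and the sharp LIL constant. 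Bridging this final analytic gap, and packaging the three identifications so that they yield \emph{equalities} rather than inequalities, is what the body of the proof will need to accomplish.
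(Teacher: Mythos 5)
Your proposal is correct in outline, but it is precisely the classical thermodynamic-formalism route that the paper explicitly sets out to avoid. The introduction states: ``One of our central objectives is to give a new proof of Theorem \ref{dynamical-connections} that does not involve thermodynamic formalism. Instead, we work with a new central quantity: the \emph{local variance} of a dyadic martingale associated to a Bloch function.'' What you sketch---encoding $f$ as a H\"older cocycle over an expanding Markov map, identifying $\beta''(0)$ with $P''(0)$ via pressure, the $\sigma^2$ identification via the CLT for hyperbolic systems, and the Philipp--Stout LIL---is the assembly of the three prior results the paper credits to Przytycki--Urba\'nski--Zdunik, Makarov--Binder, and McMullen, and it carries exactly the technical baggage you honestly flag at the end (spectral gap, analyticity of pressure, uniform Koebe comparison at the second-derivative scale).

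The paper's argument is structured quite differently and is considerably more self-contained. The key object is the local variance $\var_I^n B$ of the dyadic martingale $B$ associated to $b = \log f'$. Theorem \ref{local-variance} shows, using nothing more than Stout's martingale LIL and elementary Taylor expansion, that if a real martingale with bounded increments has $m \le \var_I \le M$ for all $I$, then $\sigma^2$, $\tfrac12 C_{\LIL}^2$, and $\lim_{t\to0}\beta(t)/(t^2/2)$ are all squeezed between $m$ and $M$. Theorem \ref{variance-thm} (via Green's identity) shows that $\var_I^n B$ is, up to an $O(1/\sqrt n)$ error, the box average $\fint_{\square_I^n} |2b'/\rho|^2\,\tfrac{|dz|^2}{y}$. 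Finally, for $\mu$ invariant under a Blaschke product or a co-compact Fuchsian group, ergodicity of the geodesic flow (or the Riemann surface lamination) gives \eqref{eq:ergodicity}: for $n$ large, every $n$-box average lies within $\varepsilon$ of $\sigma^2(\log f')$. So the local variance is nearly constant, $m \approx M \approx \sigma^2(\log f')\log 2 / 2$, and Theorem \ref{local-variance} forces the three quantities in \eqref{eq:1234} to coincide. No transfer operator, pressure function, or Ruelle--Perron--Frobenius spectral gap is invoked anywhere; the only dynamical input is ergodicity, used once, to pin the box averages. The martingale route also dovetails with the rest of the paper: the very same Theorems \ref{local-variance} and \ref{variance-thm}, combined with the Box Lemma, yield the universal bounds of Theorem \ref{main-thm} and the central limit theorem for extremals---something your pressure-based argument would not deliver, since universal extremals need not carry any dynamics.
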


The equalities in (\ref{eq:1234}) are mediated by a fourth quantity 
involving the {\em dynamical asymptotic variance} of a H\"older continuous potential from thermodynamic formalism. Theorem 
\ref{dynamical-connections} has a rich history: 
the connection with $C^2_{\LIL}$ is due to Przytycki, Urba\'nski, Zdunik  \cite{PUZ2}, with integral
means due to Makarov and Binder \cite{makarov99, binder}, and with $\sigma^2$ by McMullen \cite{mcmullen}, see also \cite{AIPP} for additional details.
One of our central objectives is to give a new proof of Theorem \ref{dynamical-connections} that does not involve thermodynamic formalism.
Instead, we work with a new central quantity: the {\em local variance} of a dyadic martingale associated to a Bloch function.
The definition will be given in Section \ref{sec:probability}.

We emphasize that the above quantities are unrelated in general. We refer the reader to \cite{BaMo, le-zinsmeister} for a discussion and interesting
examples.
Nevertheless, one can ask if  the above characteristics agree on the level of universal bounds, taken over all conformal maps.
We show that this is essentially the case; however, in order to be able to localize these characteristics, we are forced to restrict to conformal maps that have quasiconformal extensions with bounded distortion.

To be concrete, let $\bs$ be the class of conformal maps $f: \mathbb{D} \to \mathbb{C}$ normalized so that $f(0) = 0$ and $f'(0) = 1$, and for $0 < k < 1$, let $\bs_k \subset \bs$ denote the collection of maps that admit a $k$-quasiconformal extension to the complex plane.
Let $B_k(\tau) := \sup_{f \in \bs_k} \beta_{\log f'}(\tau)$.

\begin{theorem}
\label{main-thm}
$$
\lim_{\tau \to 0} \frac{B_k(\tau)}{|\tau|^2/4} =  \sup_{f \in \bs_k} \sigma^2(\log f') = \sup_{f \in \bs_k} C_{\LIL}^2(\log f').
$$
\end{theorem}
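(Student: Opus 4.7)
The strategy is to sandwich all three quantities in Theorem \ref{main-thm} between the single auxiliary number
$$\Sigma^2(k) \; := \; \sup_{f \in \mathbf{S}_k} \sigma^2(\log f'),$$
and prove separately that $\lim_{\tau\to 0} B_k(\tau)/(|\tau|^2/4) = \Sigma^2(k)$ and $\sup_{f \in \mathbf{S}_k} C_{\LIL}^2(\log f') = \Sigma^2(k)$.

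The preparatory step is to set up the local variance of the dyadic martingale associated to a Bloch function $b=\log f'$, as promised in Section \ref{sec:probability}, and to establish the key lemma that for every $f \in \mathbf{S}_k$ the conditional variances of this martingale are bounded by $\Sigma^2(k)$ at every dyadic scale. This rests on a self-similarity of the class $\mathbf{S}_k$: a $k$-quasiconformal extension is preserved under precomposition by M\"obius automorphisms of $\mathbb{D}$ and under normalized post-dilations, so restricting $f$ to a Whitney cube and renormalizing produces a new $\tilde f \in \mathbf{S}_k$ whose (global) asymptotic variance agrees with the local asymptotic variance of the original map on that cube. This step is precisely where the restriction to quasiconformally extendible maps becomes essential, since without a QC extension one cannot zoom in without losing control.

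For the upper bounds, I would decompose $\int_{|z|=r} |e^{\tau b(z)}|\,|dz|$ along radially ordered Whitney pieces, reducing the integral means to the exponential moment of the associated dyadic martingale $M_n$. A Bernstein-type bound using the uniform $\Sigma^2(k)$ control on conditional variances yields $\mathbb{E}[e^{\tau M_n}] \le \exp\!\bigl(\tfrac{\tau^2}{4}\,\Sigma^2(k)\, n + o(\tau^2 n)\bigr)$, hence
$$B_k(\tau) \; \le \; \tfrac{|\tau|^2}{4}\,\Sigma^2(k) + o(|\tau|^2) \qquad (\tau \to 0)$$
uniformly in $f \in \mathbf{S}_k$, proving the first limit is at most $\Sigma^2(k)$. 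The same variance bound, combined with the martingale law of the iterated logarithm applied radially almost surely, gives $C_{\LIL}(b) \le \Sigma(k)$ for every $f \in \mathbf{S}_k$; the difference between $b(re^{i\theta})$ at $r=1-e^{-n}$ and $M_n$ is of order $\|b\|_{\mathcal{B}} \le 6$ and is absorbed into the $\sqrt{\log\log\log}$ denominator.

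For the matching lower bounds, given $\varepsilon > 0$ I would pick $f \in \mathbf{S}_k$ with $\sigma^2(\log f') > \Sigma^2(k) - \varepsilon$ and produce a dynamical representative $g \in \mathbf{S}_k$ (for instance via a quasi-Fuchsian or Schottky-type self-similarization of $f$) whose image circle is invariant under a hyperbolic conformal dynamical system and whose asymptotic variance is within $2\varepsilon$ of $\Sigma^2(k)$. Theorem \ref{dynamical-connections} applied to $g$ then gives
$$2\, \tfrac{d^2}{d\tau^2}\big|_{\tau=0}\beta_{\log g'}(\tau) \; = \; \sigma^2(\log g') \; = \; C_{\LIL}^2(\log g') \; \ge \; \Sigma^2(k) - 2\varepsilon,$$
which forces both $\lim B_k(\tau)/(|\tau|^2/4)$ and $\sup C_{\LIL}^2(\log f')$ to be at least $\Sigma^2(k) - 2\varepsilon$. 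Letting $\varepsilon \to 0$ closes the sandwich. The main obstacle I anticipate is this last self-similarization step: producing a $k$-quasiconformally extendible $g$ whose asymptotic variance is almost extremal \emph{and} whose image boundary is dynamically invariant, without losing control of the distortion. The natural idea is to replace a Whitney piece of $f$ by self-similar copies of itself at every scale via conformal welding, but keeping the $k$-QC bound through the gluing appears to require either a delicate welding estimate or a compactness/normal-families argument within $\mathbf{S}_k$, and this is exactly the point at which the quasiconformal hypothesis seems indispensable.
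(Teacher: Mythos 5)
Your outline tracks the paper's route in spirit -- uniform control of the local variance of the Bloch martingale, fed into martingale machinery for the LIL and the integral means, with a self-similar extremal for the lower bound -- but the pivotal ``key lemma'' is misstated in a way that breaks the argument, and your proof sketch would not establish even the correct version. You assert that for every $f \in \bs_k$ the one-generation conditional variance $\var_I$ of the associated dyadic martingale is bounded by $\Sigma^2(k)$ at every dyadic scale. That single-scale statement is too strong: the asymptotic variance $\sigma^2$ is a multi-generation C\'esaro average, and a Beltrami coefficient concentrated in a thin Whitney annulus can inflate $\var_I$ on the corresponding interval without much affecting $\sigma^2$. Moreover, the rescaling heuristic you offer -- that zooming into a Whitney cube and renormalizing produces $\tilde f \in \bs_k$ whose global asymptotic variance equals the ``local asymptotic variance'' on that cube -- does not prove it: the global asymptotic variance of $\tilde f$ is again an infinite-scale limit and bears no direct relation to the one-step conditional variance of $f$. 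What the paper actually invokes is the Box Lemma from \cite{qcdim} (Lemma \ref{boxcart-global}): for every $\varepsilon > 0$ there exists $n$ so that the $n$-generation box average $\var_I^n$ is $< \Sigma^2(k) + \varepsilon$, uniformly over $n$-boxes and over $\bs_k$. Establishing this needs a compactness/normal-families argument in $\bs_k$ on top of rescaling, and the subsequent analysis then treats $B$ as a $p$-adic martingale with $p=2^n$ precisely so that the $n$-box average becomes a one-step conditional variance at the coarser scale (Theorems \ref{local-variance} and \ref{variance-thm}).

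There are two further gaps. First, Stout's LIL (Lemma \ref{stout-thm}) and the exponential-moment expansion underlying Theorem \ref{local-variance}(iv) are statements about \emph{real-valued} martingales, whereas $b = \log f'$ is complex and $\sigma^2(b)$, $C_{\LIL}(b)$, $\beta_b(\tau)$ are defined through $|b|$ and complex $\tau$. The paper bridges this with the complexification relations (\ref{eq:ltg2})--(\ref{eq:ltg3}), which give $\var_I^n(\re B) \approx \tfrac12 \var_I^n B$ and $\var_I^n(\re B, \im B) \approx 0$, and hence $C_{\LIL}(\re b) = C_{\LIL}(b)$ and $\sigma^2(\re b) = \tfrac12\sigma^2(b)$ (Theorem \ref{lil-circular}); without this, your reduction to a real martingale is unjustified. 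Second, for the lower bound you correctly diagnose that one must manufacture an almost-extremal $g \in \bs_k$ of dynamical type, and you also correctly flag that controlling the distortion through the self-similarization is the hard point -- but you stop there. That construction is exactly Box Lemma (ii) of \cite{qcdim}, which produces a conformal map with dilatation periodic for the $2^n$-adic grid and all $n$-box averages $> \Sigma^2(k) - \varepsilon$. Once such a map is in hand, there is no need to route through Theorem \ref{dynamical-connections} (which in this paper is itself proved by the same local-variance machinery): the pinched local variances feed directly into Theorem \ref{local-variance} to yield the matching lower bounds for all three characteristics.
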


The above quantity will be denoted $\Sigma^2(k)$. As discussed in \cite{qcdim}, $\Sigma^2(k)/k^2$ is a non-decreasing convex function of $k$. It is currently known that $$0.93 < \lim_{k \to 1^-} \Sigma^2(k) < (1.24)^2.$$ We refer the reader to \cite[Section 8]{AIPP} for the lower bound
and to \cite{HK, hedenmalm-shimorin} for the upper bound.
A theorem of Makarov \cite{makarov87}, 
 \cite[Theorem VIII.2.1]{GM} shows:
\begin{corollary}
\label{main-cor2}
{\em (i)} Let $\Omega = f(\mathbb{D})$ be the image of the unit disk and $z_0$ be a point in $\Omega$.
The harmonic measure $\omega_{z_0}$ on $\partial \Omega$, as viewed from $z_0$, is absolutely continuous with respect to the Hausdorff measure  $\Lambda_{h(t)}$,
$$h(t)=t\,\exp\left\{C\sqrt{\log\frac{1}{t}\log\log
\log\frac{1}{t}}\right\},\qquad 0<t<10^{-7},$$
for any $C \ge C_{\LIL}(b_f)$. In particular, $C = \sqrt{\Sigma^2(k)}$ works.

{\em (ii)} Conversely, if $C < \sqrt{\Sigma^2(k)}$, there exists a conformal map $f \in \bs_k$ for which $\omega_{z_0} \perp \Lambda_{h(t)}$.
\end{corollary}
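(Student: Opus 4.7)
The plan is to deduce both parts of the corollary from the gauge theorem of Makarov, cited as \cite[Theorem VIII.2.1]{GM}, together with the LIL sup-equality $\sup_{f \in \bs_k} C_{\LIL}^2(\log f') = \Sigma^2(k)$ established in Theorem \ref{main-thm}.

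For part \emph{(i)}, Makarov's theorem asserts that whenever $b_f$ obeys the law of the iterated logarithm with constant $C_{\LIL}(b_f)$, the harmonic measure $\omega_{z_0}$ is absolutely continuous with respect to $\Lambda_{h(t)}$ for every gauge constant $C \ge C_{\LIL}(b_f)$. Theorem \ref{main-thm} bounds $C_{\LIL}(b_f) \le \sqrt{\Sigma^2(k)}$ uniformly for $f \in \bs_k$, so the single choice $C = \sqrt{\Sigma^2(k)}$ serves simultaneously for every map in the class.

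For part \emph{(ii)}, Theorem \ref{main-thm} produces, given any $C < \sqrt{\Sigma^2(k)}$, a map $f \in \bs_k$ with $C_{\LIL}(b_f) > C$. Since $C_{\LIL}$ is an essential supremum, a subset $E \subset \partial \mathbb{D}$ of positive Lebesgue measure exhibits a pointwise LIL ratio strictly exceeding $C$. Selecting radii $r_n \to 1$ along which $|b_f(r_n e^{i\theta})|$ beats $C\sqrt{\log \frac{1}{1-r_n} \log\log\log \frac{1}{1-r_n}}$ on a definite fraction of $E$ produces an efficient cover of a harmonic-measure-positive subset of $f(E)$ by Koebe disks whose $h$-content is negligible, forcing $\omega_{z_0} \perp \Lambda_{h(t)}$.

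The main obstacle is the extraction step in \emph{(ii)}: the gauge theorem is usually stated with the absolute-continuity direction in mind, so the singularity direction must be assembled from the sharpness of the LIL on $E$ and a standard Koebe-distortion translation of radial estimates into a disk cover of the image boundary. What Theorem \ref{main-thm} contributes here is the existence of a single $f \in \bs_k$ whose boundary pathology is strong enough to defeat the $\Lambda_{h(t)}$ gauge; the remaining LIL-to-cover argument is routine and independent of the quasiconformal class.
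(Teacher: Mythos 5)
Part (i) is exactly what the paper does: Makarov's gauge theorem (the absolute-continuity direction) combined with the bound $C_{\LIL}(b_f) \le \sqrt{\Sigma^2(k)}$ for all $f \in \bs_k$ coming from Theorem~\ref{main-thm}. No issues there.

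Part (ii) has a genuine gap. From the essential-supremum definition of $C_{\LIL}$ you only extract a set $E$ of \emph{positive} Lebesgue measure on which the radial LIL ratio of $b_f$ exceeds $C$; your covering argument then produces a set of positive harmonic measure and zero $\Lambda_h$-content. That establishes $\omega_{z_0} \not\ll \Lambda_h$, which is strictly weaker than the claimed mutual singularity $\omega_{z_0} \perp \Lambda_h$. Mutual singularity requires the LIL ratio to exceed $C$ on a set of \emph{full} measure: a general $f \in \bs_k$ with $C_{\LIL}(b_f) > C$ could have a large LIL ratio on one arc and a small one elsewhere, in which case $\omega$ and $\Lambda_h$ are neither mutually singular nor mutually absolutely continuous. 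The paper avoids this because the near-extremals that witness $\sup_{f \in \bs_k} C_{\LIL}^2(b_f) = \Sigma^2(k)$ are the ones produced by Lemma~\ref{boxcart-global}(ii): their dilatations are periodic with respect to a $2^n$-adic grid, so all box averages of $|2n_f/\rho_{\mathbb{H}}|^2$ lie within $\varepsilon$ of $\Sigma^2(k)$, hence by Theorem~\ref{variance-thm} the local variance of the associated martingale is pinched, and by Theorem~\ref{local-variance}(i) together with Stout's lemma the LIL ratio is \emph{a.e.\ constant} and larger than $C$. With that upgrade, the singularity direction of the cited Makarov theorem (\cite[Theorem VIII.2.1]{GM}) applies verbatim and delivers $\omega_{z_0} \perp \Lambda_h$. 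You also spend effort re-deriving the Koebe-cover singularity argument, which is unnecessary: it is already part of the cited theorem, and the only non-standard input here is supplying an $f$ with a.e.\ LIL ratio $> C$, which is what the pinched local-variance construction provides.
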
 

The connections to LIL in Theorem \ref{main-thm} and Corollary \ref{main-cor2} were originally proved together with I.~Kayumov using a different method than 
presented here.

\newpage

We now turn to the infinitesimal analogues of the above results from the point of view of universal Teichm\"uller space. Consider the quantity
\begin{equation}
\label{eq:sigma2-def}
\Sigma^2 := \sup_{|\mu| \le \chi_{\mathbb{D}}} \sigma^2(\mathcal S\mu),
\end{equation}
where
\begin{equation}
\label{eq:beurling-def}
\mathcal S\mu(z)=-\frac{1}{\pi} \int_{\mathbb{D}}\frac{\mu(w)}{(z-w)^2} \, |dw|^2, \qquad |z| > 1,
\end{equation}
is the {\em Beurling transform.}
A simple computation shows that $\mathcal S \mu \in \mathcal B(\mathbb{D}^*)$, the Bloch space of the exterior unit disk.
Furthermore, if $w^{t\mu}$ is the principal solution to the Beltrami equation $\overline{\partial}w = t \mu \, \partial w$, then
\begin{equation}
\label{eq:quadratic-error}
\| \log (w^{t\mu})' - \mathcal S\mu \|_{\mathcal B(\mathbb{D}^*)} = \mathcal O(|t|^2), \qquad t \in \mathbb{D},
\end{equation}
 for instance, see
\cite[Section 2]{qcdim}. In particular, $\Sigma^2 = \lim_{k \to 0} \Sigma^2(k)/k^2$.

 In order to keep the discussion in the disk,
it is sometimes preferable to work with the {\em Bergman projection}
\begin{equation}
\label{eq:bergman-def}
 P\mu(z)=\frac{1}{\pi} \int_{\mathbb{D}}\frac{\mu(w)}{(1-z\overline{w})^2} \, |dw|^2.
\end{equation}
The two operators are connected by $\mathcal S\mu(z) = -(1/\overline{z}^2) P\mu(1/\overline{z})$. 

The infinitesimal analogue of Theorem \ref{main-thm} can be expressed in terms of the  {\em rescaled integral means spectrum}
\begin{equation}
\label{eq:inf-ims}
B_0(\tau) \, := \, \lim_{k \to 0}  B_k(\tau/k) \, = \, 
 \sup_{|\mu| \le \chi_{\mathbb{D}}} \beta_{P\mu} (\tau).
 \end{equation}
\begin{remark}
 Since the collection of Bloch functions $\{P\mu, |\mu| \le \chi_{\mathbb{D}}\}$ is invariant under rotation by $e^{i\theta} \in \mathbb{S}^1$,
 $B_0(\tau)$ only depends on $|\tau|$.
\end{remark}

\begin{corollary}
\label{main-thm-inf}
$$
\Sigma^2 \, := \, \lim_{\tau \to 0} \frac{B_0(\tau)}{|\tau|^2/4}  \, = \, \sup_{|\mu| \le \chi_\mathbb{D}} \sigma^2(P\mu) \, = \, \sup_{|\mu| \le \chi_\mathbb{D}} C_{\LIL}^2(P\mu).
$$
\end{corollary}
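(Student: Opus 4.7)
The plan is to derive Corollary \ref{main-thm-inf} as the $k \to 0^+$ limit of Theorem \ref{main-thm}. The analytic bridge is the expansion \eqref{eq:quadratic-error}: for any $|\mu| \le \chi_\mathbb{D}$ and small $k$, one has $\log(w^{k\mu})' = k\mathcal{S}\mu + O(k^2)$ in $\mathcal{B}(\mathbb{D}^*)$, with constants uniform in $\mu$. The natural reflection between $\mathcal{B}(\mathbb{D})$ and $\mathcal{B}(\mathbb{D}^*)$ transports the Bergman family $\{P\mu\}$ to the Beurling family $\{\mathcal{S}\mu\}$ and preserves the three characteristics $\sigma^2$, $C_{\LIL}^2$, and $\beta$, so the suprema over $|\mu|\le \chi_\mathbb{D}$ may be taken on either side.

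The $\sigma^2$ and $C_{\LIL}^2$ identities come first because both functionals are Lipschitz on $\mathcal{B}$: Minkowski's inequality applied to the $L^2$-circle averages of \eqref{eq:av} yields $|\sigma(a) - \sigma(b)| \le C\|a-b\|_\mathcal{B}$, and Makarov's universal LIL bound yields $|C_{\LIL}(a)-C_{\LIL}(b)| \le C\|a-b\|_\mathcal{B}$. Feeding in the expansion and squaring,
\begin{equation*}
\sigma^2(\log(w^{k\mu})')/k^2 = \sigma^2(\mathcal{S}\mu) + O(k), \qquad C_{\LIL}^2(\log(w^{k\mu})')/k^2 = C_{\LIL}^2(\mathcal{S}\mu) + O(k),
\end{equation*}
with errors uniform in $\mu$. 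Taking $\sup_\mu$, invoking Theorem \ref{main-thm}, and sending $k \to 0$ via $\Sigma^2 = \lim_k \Sigma^2(k)/k^2$ yields $\sup_\mu \sigma^2(P\mu) = \sup_\mu C_{\LIL}^2(P\mu) = \Sigma^2$.

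For the integral-means equality, $\beta$ is not Lipschitz in $\mathcal{B}$; the substitute is the H\"older bound
\begin{equation*}
\beta_{a+b}(\tau) \le p^{-1}\beta_a(p\tau) + q^{-1}\beta_b(q\tau), \qquad p^{-1}+q^{-1}=1,
\end{equation*}
combined with a universal quadratic estimate $\beta_b(\tau) \le C|\tau|^2\|b\|_\mathcal{B}^2$ valid when $|\tau|\|b\|_\mathcal{B}$ is bounded. Applied with $a=k\mathcal{S}\mu$ and $b$ the $O(k^2)$-Bloch error, and with $p \to 1^+$ tuned slowly against $k \to 0$, this yields $\beta_{\log(w^{k\mu})'}(\tau/k) = \beta_{\mathcal{S}\mu}(\tau) + o(|\tau|^2)$ uniformly in $\mu$ for small $\tau$. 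Taking $\sup_\mu$ and using \eqref{eq:inf-ims}, the target reduces to $\lim_{\tau\to 0} \sup_\mu \beta_{\mathcal{S}\mu}(\tau)/(|\tau|^2/4) = \Sigma^2$.

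The main obstacle is this last step: interchanging $\sup_\mu$ and $\lim_{\tau\to 0}$. One needs the expansion $\beta_{P\mu}(\tau)/(|\tau|^2/4) \to \sigma^2(P\mu)$ to hold \emph{uniformly} over $\{P\mu : |\mu|\le\chi_\mathbb{D}\}$, not merely pointwise. The expected route is to leverage the uniform Bloch bound on this family together with the Chebyshev/H\"older tools above to produce a uniform second-order expansion of $\beta_{P\mu}$ near the origin; the matching lower bound then follows from testing against near-extremal $\mu$ and combining with the $\sup_\mu \sigma^2(P\mu) = \Sigma^2$ identity established in the previous paragraph.
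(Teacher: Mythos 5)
Your treatment of the two Lipschitz functionals is sound. Both $\sigma$ and $C_{\LIL}$ are subadditive and are dominated by a multiple of $\|\cdot\|_{\mathcal B}$, so \eqref{eq:quadratic-error} together with Theorem~\ref{main-thm} and the limit $\Sigma^2 = \lim_{k\to 0}\Sigma^2(k)/k^2$ gives $\sup_\mu \sigma^2(P\mu) = \sup_\mu C_{\LIL}^2(P\mu) = \Sigma^2$ exactly as you describe. This is a legitimate route, and it differs from the paper's: the paper does \emph{not} pass through Theorem~\ref{main-thm} by a $k\to 0$ limit, but instead applies the infinitesimal Box Lemma (Lemma~\ref{boxcart}) directly to $\mathcal S^\#\mu$, feeding it through Theorems~\ref{variance-thm} and~\ref{local-variance} exactly as in the finite-$k$ proof. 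Your approach buys the $\sigma^2$ and $C_{\LIL}^2$ identities at the cost of importing Theorem~\ref{main-thm} as a black box; the paper's approach obtains them on the same footing as the finite-$k$ case.

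The integral-means part, however, has a genuine gap, and you locate it but then propose a repair that cannot work. You write that one needs the expansion $\beta_{P\mu}(\tau)/(|\tau|^2/4) \to \sigma^2(P\mu)$ to hold uniformly, ``not merely pointwise.'' But the pointwise statement is \emph{false}: the paper stresses explicitly (citing Ba\~nuelos--Moore and Le--Zinsmeister) that for a single Bloch function these three characteristics are unrelated. What Theorem~\ref{local-variance}(iv) gives is only the two-sided pinching $m \le \liminf_{t\to 0}\beta_S(t)/(t^2/2) \le \limsup_{t\to 0}\beta_S(t)/(t^2/2) \le M$ with $m = \inf_I \var_I$, $M = \sup_I \var_I$, and $\sigma^2(S)$ merely lies in the same interval $[m,M]$ --- the two need not coincide unless $m$ and $M$ are themselves close. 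There is no uniform second-order expansion of $\beta_{P\mu}$ whose coefficient is $\sigma^2(P\mu)$; nor does ``testing against near-extremal $\mu$ for $\sigma^2$'' supply the lower bound, since a $\mu$ with large $\sigma^2(P\mu)$ could still have small $\inf_I\var_I$ and hence small $\beta$. You also cannot salvage matters by interchanging $\lim_{\tau\to 0}$ and $\lim_{k\to 0}$ through Theorem~\ref{main-thm}: for fixed $\tau$ the argument $\tau/k$ runs off to infinity as $k\to 0$, so the small-$\tau$ asymptotics of $B_k$ provided by Theorem~\ref{main-thm} is never in play.

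What actually closes the gap in the paper is the quantitative form of Lemma~\ref{boxcart}: part (i) pins the local variance of every $n$-box to $\Sigma^2 + C/n$, and part (ii) produces a periodic $\mu$ pinning it to $\Sigma^2 - C/n$ on \emph{every} $n$-box. Feeding these two-sided local bounds into the martingale moment expansion of Section~\ref{sec:ims}, with the scale tuned as $n \approx \lfloor t^{-1}\rfloor$, gives the uniform two-sided estimate $B_0(t) = \Sigma^2 t^2/4 + \mathcal O(|t|^{5/2})$. It is this localization --- an everywhere bound on $\var_I$ rather than a bound on the averaged quantity $\sigma^2$ --- that the H\"older trick alone cannot produce, and it is the ingredient your proposal is missing.
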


  The quantity $\Sigma^2$ was first studied in \cite{AIPP}, where it was established that
$
0.87913 \le \Sigma^2 \le 1,
$
while Hedenmalm \cite{hedenmalm} proved the strict inequality $\Sigma^2 < 1$.
In \cite{AIPP}, the original motivation for investigating $\Sigma^2$ arose from the connection between dimensions of quasicircles and McMullen's identity (Theorem
\ref{dynamical-connections}).
 Let $D(k)$ denote the maximal Minkowski dimension of a $k$-quasicircle, the image of the unit circle under a $k$-quasiconformal mapping of the plane.
 
As is well-known, the problem of finding $D(k)$ reduces to the study of integral means 
via the anti-symmetrization procedure of \cite{kuhnau, smirnov} and the relation \cite[Corollary 10.18]{Pomm}
\begin{equation}
\label{eq:beta-dimension}
\beta_{f}(t) = t-1 \quad \Longleftrightarrow \quad t = \Mdim f(\mathbb{S}^1), \qquad f \in \bS_k.
\end{equation}
In \cite{qcdim}, the author modified the argument of Becker and Pommerenke \cite{BP} for estimating integral means to show the asymptotic expansion
\begin{equation}
\label{eq:qcdim-exp}
D(k) = 1 + k^2\Sigma^2 + \mathcal O(k^{8/3-\varepsilon}).
\end{equation}
Together with Hedenmalm's estimate, this improves on Smirnov's bound $D(k) \le 1+k^2$ from \cite{smirnov}.
In Section \ref{sec:ims}, we will give an estimate for $B_0(\tau)$ which implies (\ref{eq:qcdim-exp}), albeit with
a slightly weaker error term.

An a priori difficulty in studying $\Sigma^2$ is that the extremal problem (\ref{eq:sigma2-def}) has infinitely many solutions. For instance, one can take an extremal $\mu$
and modify it in an arbitrary manner on a compact subset of the disk. Alternatively, one can pullback an extremal $\mu(z) \frac{d\overline{z}}{dz}$ by a Blaschke product
$B: \mathbb{D} \to \mathbb{D}$. Further, given two extremals $\mu, \nu$, one can glue them together $\mu \cdot \{\chi_{\re z < 0}\} + \nu \cdot \{\chi_{\re z > 0}\}$ to
form yet another extremal.
In \cite[Section 6]{AIPP}, extremals were studied indirectly via fractal approximation:
\label{AIPP-fat}
\begin{equation}
\label{eq:AIPP-fat}
\Sigma^2 = \sup_{\mu \in M_{\ei}, \ |\mu| \le \chi_{\mathbb{D}}} \sigma^2(\mathcal S\mu),
\end{equation}
where  $M_{\ei}$ is the class of Beltrami coefficients that are {\em eventually-invariant} under $z \to z^d$ for some $d \ge 2$, i.e.~satisfying
$(z^d)^*\mu = \mu$ in some open neighbourhood of the unit circle. In particular, since Theorem \ref{dynamical-connections} is applicable
to conformal maps $w^\mu(z)$, $z \in \mathbb{D}^*$, with $\mu \in M_{\ei}$, $\|\mu\|_\infty < 1$, the inequality $\Sigma^2 \le 1$ follows from Smirnov's $1+k^2$ bound.

In Section \ref{sec:clt} of the present paper, we show that extremal Bloch functions $b = P\mu$ obey a central limit theorem. For a fixed $r <1$,
we may consider
\begin{equation}
\label{eq:rescaled-bloch}
 \tilde b_r(\theta) := \frac{b(re^{i \theta})}{\sqrt {|\log(1-r)|}}
\end{equation}
as a random variable with respect to the probability measure $|dz|/2\pi$.
\begin{theorem}
\label{complex-gaussian}
Suppose $|\mu| \le \chi_{\mathbb{D}}$.
Given $\varepsilon > 0$, there exists a $\delta > 0$ such that if $r$ is sufficiently close to 1 and
 $$
 \sigma^2(P\mu,r) \, = \, \frac{1}{2\pi |\log(1-r)|} \int_{|z|=r} |P\mu(z)|^2 \, |dz| \, > \, \Sigma^2 - \delta,
 $$ 
then the distribution of $(\widetilde{P\mu})_r$ is close to a complex Gaussian, of mean 0 and variance $\Sigma^2$,
 up to an additive error of at most $\varepsilon$.
In other words, $\re (\widetilde{P\mu})_r$ and $\im (\widetilde{P\mu})_r$ approximate independent real Gaussians of variance $\Sigma^2/2$.
\end{theorem}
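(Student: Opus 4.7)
The plan is to approximate $b = P\mu$ by a complex dyadic martingale on $\partial\mathbb{D}$ and then invoke a martingale CLT. Let $\mathcal F_n$ be the dyadic $\sigma$-algebra on $[0,2\pi)$ of scale $2^{-n}$, set $r_n = 1 - 2^{-n}$, and define $S_n(\theta) = \mathbb{E}\bigl[b(r_n e^{i\cdot}) \bigm| \mathcal F_n\bigr](\theta)$. A standard Bloch estimate gives $\|S_n - b(r_n e^{i\cdot})\|_\infty = O(1)$ and the increments $\Delta_k := S_k - S_{k-1}$ are uniformly bounded by $C\|b\|_{\mathcal B}$; in particular $\tilde b_{r_n} \approx S_n/\sqrt{n\log 2}$ up to vanishing error. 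Moreover
$$\tfrac{1}{n\log 2}\,\mathbb{E}|S_n|^2 \;=\; \sigma^2(b, r_n) + o(1),$$
so the near-extremality hypothesis translates to $\tfrac{1}{n\log 2}\sum_{k \le n}\mathbb{E}|\Delta_k|^2 \ge \Sigma^2 - \delta - o(1)$.

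The heart of the argument is a localization principle. For each dyadic arc $I \subset \partial\mathbb{D}$ there is a M\"obius self-map of $\mathbb{D}$ sending a fixed reference arc onto $I$; pulling $\mu$ back produces a new Beltrami coefficient $\mu_I$ with $|\mu_I| \le \chi_{\mathbb{D}}$, and the martingale structure of $P\mu_I$ centered at the reference arc matches that of $b$ centered at $I$ up to lower-order errors (this is precisely where the local variance notion from Section~\ref{sec:probability} enters). Applying the universal bound $\sigma^2(P\mu_I) \le \Sigma^2$ at every scale therefore gives the pointwise a.e.\ inequality
$$V_n(\theta) \;:=\; \tfrac{1}{n\log 2}\sum_{k=1}^n \mathbb{E}\bigl[|\Delta_k|^2 \bigm| \mathcal F_{k-1}\bigr](\theta) \;\le\; \Sigma^2 + o(1).$$
Combining this universal upper bound with the global lower bound on $\mathbb{E}V_n$ forces $V_n \to \Sigma^2$ in $L^1$, hence in probability, with an explicit rate controlled by $\delta$.

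For circular symmetry of the Gaussian limit we also control the conditional \emph{pseudovariance} $W_n(\theta) := \tfrac{1}{n\log 2}\sum_{k\le n}\mathbb{E}[\Delta_k^2 \mid \mathcal F_{k-1}](\theta)$. Holomorphicity of $b$ kills this quantity on average: the Taylor expansion of $b^2$ gives $\int_{|z|=r} b(z)^2\,|dz|/2\pi = b(0)^2 = O(1)$, whence $\mathbb{E}W_n = O(1/n)$, and the same M\"obius localization applied to the (vanishing) asymptotic pseudovariance of $P\mu_I$ yields $W_n \to 0$ in probability. A martingale CLT in the spirit of McLeish / Hall--Heyde, applied to the complex triangular array $\{\Delta_k/\sqrt{n\log 2}\}$ whose Lindeberg condition is trivial from the $L^\infty$ bound on $\Delta_k$, then gives that $S_n/\sqrt{n\log 2}$ converges in distribution to the circular complex Gaussian $\mathcal{CN}(0,\Sigma^2)$, with a quantitative rate inherited from the convergences of $V_n$ and $W_n$. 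Choosing $\delta$ small in terms of $\varepsilon$ and $r$ close enough to $1$ (interpolation from $r_n$ to general $r$ is routine) yields the claimed $\varepsilon$-closeness.

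The main obstacle will be making the localization principle rigorous in Step 2: the Bergman projection is nonlocal, so the identification of $\mathbb{E}[|\Delta_k|^2 \mid \mathcal F_{k-1}](\theta)$ with an asymptotic variance of a pulled-back $P\mu_I$ holds only up to error terms, and one must show these errors are genuinely lower-order uniformly in the scale $k$, in position $\theta$, and in the cutoff $n$. The quadratic decay of the kernel $(1-z\bar w)^{-2}$ off the hyperbolic diagonal makes such bounds plausible, but the error bookkeeping — especially producing a quantitative rate $o(1)$ rather than a qualitative one — is the delicate part of the proof.
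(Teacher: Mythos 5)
Your overall shape is right --- replace $P\mu$ by the associated dyadic martingale, show the conditional quadratic variation is forced to be near-constant $\approx\Sigma^2$, and then invoke a martingale CLT --- and it is the same shape the paper uses (the paper runs the argument through characteristic functions in $\mathbb{H}$ and an explicit good/bad box decomposition, which is essentially equivalent to your $V_n\to\Sigma^2$ in probability plus Lindeberg). But two of your ingredients are not actually established by the arguments you sketch.

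First, the ``localization principle.'' You claim that pulling back by a M\"obius map and ``applying the universal bound $\sigma^2(P\mu_I)\le\Sigma^2$ at every scale'' gives the pointwise bound $V_n(\theta)\le\Sigma^2+o(1)$. This does not follow. The asymptotic variance $\sigma^2(P\mu_I)$ is a $\limsup$ over all scales, whereas $\mathbb{E}[|\Delta_k|^2\mid\mathcal F_{k-1}](\theta)$ corresponds to a single box average of $|(P\mu)'|^2/\rho^2$ at a fixed scale. A single box average can be much larger than $\Sigma^2$ while the asymptotic variance of the pulled-back coefficient stays $\le\Sigma^2$, with the excess compensated at other scales. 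Passing from the asymptotic bound to a uniform per-box bound is exactly the content of the Box Lemma (Lemma~\ref{boxcart}, proved in \cite{qcdim} via periodization of Beltrami coefficients, and translated into a bound on $\var_I^n B$ via Theorem~\ref{variance-thm}). The paper cites this; you would need to as well, rather than rederive it from the asymptotic bound alone.

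Second, the pseudovariance $W_n$. You correctly observe $\mathbb{E}W_n = O(1/n)$ from $\int_{|z|=r}b^2\,|dz|=O(1)$, but $\mathbb{E}W_n\to 0$ is compatible with $|W_n(\theta)|\approx\Sigma^2$ everywhere (with oscillating phase); there is no universal upper/lower bound squeeze for a complex-valued quantity, so the ``same M\"obius localization'' does not upgrade this to pointwise or in-probability convergence. The paper's route is different and deterministic: Theorem~\ref{variance-thm}, via Green's identity on each box together with the Cauchy--Riemann relation $|b'|^2=2|\nabla\re b|^2$, gives the per-box estimates $\var_I^n(\re B)=\tfrac12\var_I^n B+O(n^{-1/2})$ and $\var_I^n(\re B,\im B)=O(n^{-1/2})$ for \emph{every} box, uniformly over Bloch functions of bounded norm. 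That is the analytic input producing circular symmetry, and it is missing from your plan. Without it your argument proves at best that $\re\tilde b_r$ is approximately Gaussian of variance $\Sigma^2/2$ (given the Box Lemma), but not that $\re\tilde b_r$ and $\im\tilde b_r$ are approximately independent.
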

Summarizing, the above theorem says that in the problem of maximizing asymptotic variance, all extremals are Gaussians. In particular, 
{\em extremality invokes fractal structure}.

\subsection*{Notation} Let $\rho_*(z) = \frac{2}{|z|^2-1}$ be the density of the hyperbolic metric in the exterior unit disk $\mathbb{D}^*$ and 
$\rho_{\mathbb{H}}(z)= 1/y$ be the corresponding density in the upper half-plane.
To compare quantities, we use $A \gtrsim B$ to denote $A > C \cdot B$ for some constant $C > 0$. The notation $\fint f(t) \, g(t)dt$ 
 denotes the average value of the function $f(t)$ with respect to the measure $g(t)dt$.

 \section{Background in probability}
\label{sec:probability}

In this section, we discuss martingale analogues of the characteristics of conformal maps mentioned in the introduction. We show  that they
are controlled by the local variation.

\subsection{Martingales and square functions} 
Let $p \ge 2$ be an integer, $\mathcal D_k$ be the collection of 
$p$-adic intervals $\bigl [j \cdot p^{-k}, (j+1) \cdot p^{-k} \bigr ]$ contained in $[0,1]$, and
$\mathcal M_k$ be the $\sigma$-algebra generated by $\mathcal D_k$.
A (complex-valued) {\em $p$-adic martingale} $X$ on $[0,1]$ is
a sequence of functions $\{X_k\}_{k=0}^\infty$ such that 
\begin{itemize}
\item[(i)] $X_k$ is measurable with respect to $\mathcal M_k$,
\item[(ii)]  $\mathbb{E}(X_k | \mathcal M_{k-1}) = X_{k-1}$.
\end{itemize}
We typically view $X$ as a function from $\bigcup_{k = 0}^\infty \mathcal D_k$ to the complex numbers which satisfies the
{\em averaging property}
\begin{equation}
\label{eq:martingale-condition}
X_I = \frac{1}{p} \sum_{i=1}^{p} X_{I_i},
\end{equation}
where the sum ranges over the $p$-adic children of $I$.
For a point $x \in [0,1]$, let $I_j(x) \in \mathcal D_j$ denote the $p$-adic interval of length $p^{-j}$ containing $x$, and
$\Delta_j(x) = X_{I_j(x)}  - X_{I_{j-1}}(x)$
be the {\em jump} at step $j$.
The {\em $p$-adic square function} is given by
\begin{equation}
\langle X \rangle_n := \sum_{j=1}^n |\Delta_j(x)|^2.
\end{equation}
We say that a martingale has {\em bounded increments} if $|\Delta_j(x)| < C$ for all $x \in [0,1]$ and $j \ge 1$. For such martingales, we define:
\begin{itemize}
\item The {\em asymptotic variance}
\begin{align*}
\sigma^2(X) & = \limsup_{n \to \infty} \frac{1}{n} \int_0^1 |X_n(x)|^2 \, dx \\
& = \limsup_{n \to \infty} \frac{1}{n} \int_0^1 \langle X \rangle_n \, dx.
\end{align*}
(The equality follows from the orthogonality of the jumps.)

\item The {\em LIL constant}
$$
C_{\LIL}(X) = \esssup_{x \in [0,1]} \ \biggl\{ \limsup_{n \to \infty} \frac{|X_n(x)|}{\sqrt{ n \log\log n}}\biggr\}.
$$
\item The {\em integral means spectrum}
$$
\beta_X(\tau) = \limsup_{n \to \infty} \frac{1}{n} \cdot \log \int_0^1 \bigl | e^{\tau X_n(x)}\bigr | \, dx, \qquad \tau \in \mathbb{C}.
$$

\end{itemize}

\subsection{Local variance} For a $p$-adic interval $I$, we define the {\em local variance} of $X$ at $I$ as
\begin{equation}
\var_I X= \frac{1}{p} \sum_{i=1}^{p} |X_{I_i} - X_I|^2.
\end{equation}
More generally, we can consider
\begin{equation}
\var_I^n X= \frac{1}{n} \biggl [ \frac{1}{p^n} \sum_{i=1}^{p^n} |X_{I_i} - X_I|^2\biggr],
\end{equation}
where we sum over all $p$-adic grandchildren of $I$ of length $p^{-n}|I|$. Polarizing, we obtain  
 the notion of {\em local covariance}  \begin{equation}
\var_I^n (X, Y) = \frac{1}{n} \biggl [ \frac{1}{p^n} \sum_{i=1}^{p^n} (X_{I_i} - X_I) \overline{(Y_{I_i} - Y_I)} \biggr]
\end{equation}
of two $p$-adic martingales $X$ and $Y$.
Our aim is to show that the local variance controls the above characteristics:
\begin{theorem}
\label{local-variance}
Suppose $S$ is a real-valued martingale with bounded increments. Let $m = \inf_I \var_I$ and $M = \sup_I \var_I$. Then, 

{\em (i)} For a.e.~$x \in [0,1]$, 
$$
m \, \le \, \liminf \frac{\langle S \rangle_n}{n} \, \le \, \limsup \frac{\langle S \rangle_n}{n} \, \le \, M,
$$

{\em (ii)} $m \le \sigma^2(S) \le M$,

{\em (iii)} $m \le (1/2) \cdot C_{\LIL}^2(S) \le M$,

{\em (iv)} For $t \in \mathbb{R}$,
$$m \, \le \, \liminf_{t \to 0} \frac{\beta_S(t)}{t^2/2} \, \le \, \limsup_{t \to 0} \frac{\beta_S(t)}{t^2/2} \le M.$$
\end{theorem}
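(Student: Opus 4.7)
The key observation tying the four parts together is the identity
$$\var_{I_{j-1}(x)} S \;=\; \mathbb{E}\bigl[\,|\Delta_j(x)|^2 \,\big|\, \mathcal{M}_{j-1}\,\bigr],$$
which rewrites the local variance as the conditional variance of the $j$-th jump of $S$. Since the $\Delta_j$ are uniformly bounded with conditional mean zero, the hypothesis $m \le \var_{I_{j-1}(x)} S \le M$ becomes a two-sided envelope on the conditional-variance process of $S$, putting the problem within reach of classical martingale estimates.

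For part (i), I apply the strong law of large numbers to the bounded martingale-difference sequence $Y_j := |\Delta_j|^2 - \var_{I_{j-1}(x)} S$, obtaining $\frac{1}{n}\sum_{j=1}^n Y_j \to 0$ almost surely. Hence $\frac{\langle S\rangle_n}{n}$ and $\frac{1}{n}\sum_j \var_{I_{j-1}(x)} S$ are asymptotically equal, and the latter is sandwiched in $[m,M]$. Part (ii) follows by integrating part (i); alternatively, orthogonality of the jumps gives directly
$$\int_0^1 \langle S \rangle_n \, dx \;=\; \sum_{j=1}^n \int_0^1 \var_{I_{j-1}(x)} S \, dx \;\in\; [mn,\,Mn].$$

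Part (iii) is the deepest step: I invoke Stout's martingale law of the iterated logarithm, which, for martingales with bounded differences and $\langle S\rangle_n \to \infty$, asserts
$$\limsup_{n\to\infty} \frac{S_n(x)}{\sqrt{2 \langle S\rangle_n \log\log\langle S\rangle_n}} \;=\; 1 \quad \text{a.s.}$$
Substituting $\liminf_n \langle S\rangle_n/n \ge m$ and $\limsup_n \langle S\rangle_n/n \le M$ from part (i) and taking essential suprema over $x$ traps $\tfrac12 C_{\LIL}^2(S)$ in $[m,M]$. For part (iv), the martingale property kills the first-order term of the conditional moment generating function, so for real $t$ near zero,
$$\mathbb{E}\bigl[e^{t \Delta_j} \,\big|\, \mathcal{M}_{j-1}\bigr] \;=\; 1 + \tfrac{t^2}{2} \var_{I_{j-1}(x)} S + O(t^3),$$
uniformly in $j$ and $x$ by the bounded-increments hypothesis. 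Iterating the tower relation $\mathbb{E}[e^{tS_n}] = \mathbb{E}\bigl[e^{tS_{n-1}} \mathbb{E}[e^{t\Delta_n}\,|\,\mathcal{M}_{n-1}]\bigr]$ together with $m \le \var_{I_{j-1}} S \le M$ yields
$$\exp\bigl(n(\tfrac{t^2 m}{2} + O(t^3))\bigr) \;\le\; \mathbb{E}[e^{tS_n}] \;\le\; \exp\bigl(n(\tfrac{t^2 M}{2} + O(t^3))\bigr),$$
and letting $n \to \infty$ and then $t \to 0$ gives the advertised trap on $\beta_S(t)/(t^2/2)$. The principal obstacle is locating a form of the martingale LIL that applies under only a uniform bound on the increments; everything else reduces to standard SLLN and MGF bookkeeping.
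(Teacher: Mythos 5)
Your proposal is correct and follows essentially the same route as the paper: you recognize $\var_{I_{j-1}}S$ as the conditional variance of $\Delta_j$, introduce the auxiliary martingale difference sequence $Y_j = |\Delta_j|^2 - \var_{I_{j-1}}S$ (this is precisely the paper's martingale $T$), and then deduce (ii)--(iv) from (i) via orthogonality, Stout's LIL, and the conditional moment generating function expansion, exactly as the paper does. The only difference is cosmetic: for (i) you invoke the martingale strong law of large numbers to conclude $\frac{1}{n}\sum_{j\le n} Y_j \to 0$ a.s., whereas the paper applies Stout's LIL to $T$ to get the (stronger than needed) bound $T_n = \mathcal O(\sqrt{n\log\log n}) = o(n)$; your choice is slightly more economical, but the content is identical.
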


To evaluate the LIL constant of a martingale, we use a result of W.~Stout \cite{stout}, which is stated explicitly in the form below in \cite[Theorem 2.6]{makarov90}:
\begin{lemma}[Stout]
\label{stout-thm}
 If $S_n$ is a real-valued martingale with bounded increments, then
\begin{equation}
\label{eq:stout-thm}
\limsup_{n \to \infty} \frac{|S_n(x)|}{\sqrt{2 \langle S \rangle_n \log\log \langle S\rangle_n}} = 1,
\end{equation}
almost surely on the set $\{x : \langle S \rangle_n = \infty\}$.
\end{lemma}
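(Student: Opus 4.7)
The plan is to establish matching upper and lower bounds for $|S_n|/\sqrt{2\langle S \rangle_n \log\log \langle S \rangle_n}$ on the divergence event $\{\langle S \rangle_n \to \infty\}$; on the complementary set $S_n$ converges in $L^2$ (the increments are bounded and the square function is bounded) and the statement is vacuous.

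\textbf{Upper bound.} The engine is an exponential supermartingale. Starting from $e^x \le 1 + x + \tfrac{x^2}{2}(1+\eta)$ valid for $|x| \le \lambda c$ with $\eta = \eta(\lambda c) \to 0$ as $\lambda \to 0$, I would verify that
\[
Z_n^\lambda := \exp\Bigl(\lambda S_n - \tfrac{\lambda^2}{2}(1+\eta)\,\langle S \rangle_n\Bigr)
\]
is a supermartingale. (Technicality: the exponential inequality first yields the compensator $\sum_{j \le n} \mathbb{E}[\Delta_j^2 \mid \mathcal M_{j-1}]$ in the exponent; the discrepancy with $\langle S \rangle_n$ is itself a bounded-increment martingale and is $o(\langle S \rangle_n)$ a.s.\ by a separate SLLN, so I can replace it at the cost of shrinking $\eta$.) Doob's maximal inequality then produces a tail bound of the shape $\mathbb{P}\bigl(\sup_n[\lambda S_n - \tfrac{\lambda^2}{2}(1+\eta)\langle S \rangle_n] > y\bigr) \le e^{-y}$. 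Stratifying by geometric windows $A_k = \{(1+\delta)^k \le \langle S \rangle_n < (1+\delta)^{k+1}\}$ and choosing $\lambda$ of order $\sqrt{2\log\log(1+\delta)^k/(1+\delta)^k}$ yields a sequence of summable exceptional probabilities; a first Borel--Cantelli argument then gives $\limsup \le 1 + O(\delta)$, and $\delta \to 0$ closes this direction.

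\textbf{Lower bound.} This direction is delicate because the martingale carries no built-in independence. The cleanest route I see is Skorokhod embedding (Dubins' construction): I would realize $S_n = B_{T_n}$ for a Brownian motion $B$ and stopping times $0 = T_0 \le T_1 \le \cdots$ with $\mathbb{E}[T_n - T_{n-1} \mid \mathcal M_{n-1}] = \mathbb{E}[\Delta_n^2 \mid \mathcal M_{n-1}]$. A strong law for martingale differences applied to $(T_n - T_{n-1}) - \mathbb{E}[\Delta_n^2 \mid \mathcal M_{n-1}]$ then gives $T_n/\langle S \rangle_n \to 1$ a.s.\ on the divergence event. The classical Brownian LIL $\limsup_{t \to \infty} |B_t|/\sqrt{2t \log\log t} = 1$ transfers directly to produce the matching lower bound for $|S_n|/\sqrt{2\langle S \rangle_n \log\log \langle S \rangle_n}$.

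\textbf{Main obstacle.} The upper bound is soft, needing only the exponential supermartingale, Doob, and a summable first Borel--Cantelli step. The real difficulty is the lower bound, where without independence one cannot directly invoke a second Borel--Cantelli. Skorokhod embedding is attractive because it offloads the answer onto the Brownian LIL, but the work will lie in identifying $T_n$ with $\langle S \rangle_n$ sharply enough that LIL scaling (not merely leading-order behaviour) is preserved. The alternative, closer to Stout's original treatment, is to stop at times where $\langle S \rangle$ crosses a geometric sequence, apply a martingale CLT on each block to obtain near-Gaussian tails, and invoke a conditional Borel--Cantelli for the resulting nearly-independent increments: more elementary, but substantially heavier bookkeeping. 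I expect the bulk of the proof's effort to concentrate in whichever version of this lower bound I adopt.
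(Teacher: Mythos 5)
The paper does not prove this lemma. It is Stout's theorem, cited directly from Stout's 1970 paper and stated in the form given in Makarov's \emph{Probability methods in the theory of conformal mappings} (Theorem 2.6 there). There is therefore no ``paper's own proof'' to compare against; your sketch is a from-scratch outline of the classical result.

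As an outline of how one would prove Stout's theorem, your plan is reasonable and follows well-trodden ground: the exponential supermartingale and a first Borel--Cantelli for the upper bound is essentially Stout's original argument, while the Skorokhod-embedding route to the lower bound is closer to Strassen's 1967 treatment of the martingale LIL. Two things are worth flagging. First, a small but important detail: the statement in the paper normalizes by the \emph{observed} quadratic variation $\langle S\rangle_n=\sum_{j\le n}\Delta_j^2$, not the predictable compensator $\sum_{j\le n}\mathbb{E}[\Delta_j^2\mid\mathcal{M}_{j-1}]$, so your remark about trading the compensator for $\langle S\rangle_n$ via a martingale SLLN is exactly the right move; just note that the martingale SLLN you invoke is a weaker statement than the LIL (so no circularity), but it itself requires a short exponential-supermartingale argument of the same flavour. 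Second, your closing paragraph correctly identifies the genuine difficulty. The Brownian LIL lower bound holds along \emph{some} sequence of times $t_k$, and you observe $B$ only at the random times $T_n$; guaranteeing that the $T_n$ come close enough to the $t_k$ requires showing $T_{n}-T_{n-1}=o(T_n)$ in a quantitative sense, together with a modulus-of-continuity estimate for $B$, and for bounded-increment martingales this needs careful handling because $\langle S\rangle_n$ may grow arbitrarily slowly on $\{\langle S\rangle_\infty=\infty\}$. This is precisely where the work concentrates in Strassen's proof, and your sketch would need the corresponding stratification and blocking arguments filled in before it would constitute a proof. As a proposal, though, you have named the right machinery and the right obstacle.
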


\begin{proof}[Proof of Theorem \ref{local-variance}] (i) Consider the auxiliary martingale $T$ with $T_{[0,1]}=0$ and jumps 
$$
T(I_i) -T(I) \, := \, |X(I_i) - X(I)|^2 - \frac{1}{p} \sum_{j=1}^{p}  |X(I_j) - X(I)|^2,
$$
where $I_1, I_2, \dots, I_{p}$ are the $p$-adic children of $I$.
Note that $T$ has bounded increments since $X$ does. Applying Lemma \ref{stout-thm} to the martingale $T$, we see that
 $$T_n(x) = \mathcal O \bigl (\sqrt{n \log \log n} \bigr ) = o(n), 
\qquad \text{for a.e~}x \in [0,1].
$$
In particular,
\begin{equation} 
 \frac{\langle X\rangle_n}{n} \, = \,
\frac{1}{n} \sum_{k=1}^n  \var_{I_k(x)} + o(1).
\end{equation}
The rest is easy: (ii) is trivial, (iii) follows from (i) by Stout's lemma, while (iv) follows from the expansion
$$
\frac{1}{p} \sum_{j=1}^{p} \exp(t \Delta_j) = 1 + \frac{t^2}{2} \biggl ( \frac{1}{p} \sum_{j=1}^{p} \Delta_j^2 \biggr) + \mathcal O(t^3).
$$
This proof is complete.
\end{proof}
The same argument shows:
\begin{lemma}
\label{same-variance}
If two real-valued martingales $S_1, S_2$ satisfy $\var_I S_1 = \var_I S_2$ for all $I$, then they have the same
LIL constant.
 More generally, $$(1/2)\, |C_{\LIL}^2(S_1) - C_{\LIL}^2(S_2)| \le \sup_I |\var_I S_1 - \var_I  S_2|.$$
\end{lemma}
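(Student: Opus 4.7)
The plan is to adapt the proof of Theorem \ref{local-variance} while carefully tracking differences of local variances. Set $\varepsilon := \sup_I |\var_I S_1 - \var_I S_2|$; the goal is the quantitative comparison $|C_{\LIL}^2(S_1) - C_{\LIL}^2(S_2)| \le 2 \varepsilon$.

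The first step is a compensator construction exactly parallel to that in the proof of Theorem \ref{local-variance}(i), but now applied to the difference of squared jumps. I would introduce an auxiliary martingale $T$ with $T_{[0,1]} = 0$ and jumps
$$
T(I_i) - T(I) \,:=\, \bigl( |S_1(I_i) - S_1(I)|^2 - |S_2(I_i) - S_2(I)|^2 \bigr) \,-\, \bigl( \var_I S_1 - \var_I S_2 \bigr).
$$
The conditional mean of the first bracket over the $p$-adic children of $I$ is exactly the second bracket, so $T$ is a genuine martingale; it has bounded increments inherited from $S_1$ and $S_2$. Lemma \ref{stout-thm} then forces $T_n(x) = o(n)$ almost surely. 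Telescoping gives
$$T_n \,=\, \bigl(\langle S_1 \rangle_n - \langle S_2 \rangle_n\bigr) \,-\, \sum_{k=1}^n \bigl( \var_{I_{k-1}(x)} S_1 - \var_{I_{k-1}(x)} S_2 \bigr),$$
and since the compensator sum is bounded in modulus by $\varepsilon n$, one obtains the key square-function comparison
$$
|\langle S_1 \rangle_n - \langle S_2 \rangle_n| \,\le\, \varepsilon n + o(n) \quad \text{a.s.}
$$

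The second step converts this into the comparison of LIL constants, following the recipe used to pass from (i) to (iii) in the proof of Theorem \ref{local-variance}. Bounded increments force $\langle S_i \rangle_n = O(n)$, which yields $\log\log\langle S_i\rangle_n / \log\log n \to 1$ almost surely on the event $\{\langle S_i\rangle_n \to \infty\}$ (its complement contributes nothing to $C_{\LIL}$). Combined with Stout's limsup normalisation $|S_{i,n}|^2/\bigl(2\langle S_i\rangle_n \log\log\langle S_i\rangle_n\bigr) \to 1$, this yields the pointwise identity
$$
\limsup_n \frac{|S_{i,n}(x)|^2}{n\log\log n} \,=\, 2\limsup_n \frac{\langle S_i\rangle_n}{n} \quad \text{a.s.}
$$
Taking essential suprema in $x$ identifies $(1/2) C_{\LIL}^2(S_i)$ with $\esssup_x \limsup_n \langle S_i\rangle_n/n$; the square-function comparison from the first step then delivers the desired inequality $|C_{\LIL}^2(S_1)-C_{\LIL}^2(S_2)|\le 2\varepsilon$.

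The main technical obstacle is this pointwise identification, since in general $\limsup_n a_n b_n$ need not equal $\limsup_n b_n$ even when $\limsup_n a_n = 1$. The upper bound is immediate. For the matching lower bound, for a.e.~$x$ I would extract a subsequence $n_k$ realising the $\limsup$ of $\langle S_i\rangle_n/n$ and apply Stout's lemma along this subsequence to obtain a further sub-subsequence along which $|S_{i, n_k}|^2 \sim 2 \langle S_i\rangle_{n_k} \log\log\langle S_i\rangle_{n_k}$; the product of the two asymptotics then produces values of $n$ at which $|S_{i,n}|^2/(n\log\log n)$ approaches $2 \limsup_n \langle S_i\rangle_n/n$, as required.
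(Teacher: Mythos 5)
The first half of your proposal — the compensator martingale $T$ built from the \emph{difference} of squared jumps, Stout applied to $T$, and the resulting almost-sure comparison $|\langle S_1\rangle_n - \langle S_2\rangle_n| \le \varepsilon n + o(n)$ — is sound and is exactly the ``same argument'' the paper alludes to. The problem is in the second half. The pointwise identity you try to establish,
$\limsup_n |S_n(x)|^2/(n\log\log n) = 2\limsup_n \langle S\rangle_n(x)/n$ a.e., is \emph{false} in general, and the subsequence argument you offer for its lower bound does not repair it: Stout's lemma is a statement about a $\limsup$ over \emph{all} $n$, and $\limsup_n a_n = 1$ gives no control whatsoever on $\limsup_k a_{n_k}$ along a prescribed subsequence $n_k$; you cannot ``apply Stout along the subsequence'' to obtain a sub-subsequence where $|S_{n_k}|^2 \sim 2\langle S\rangle_{n_k}\log\log\langle S\rangle_{n_k}$. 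Concretely, take a dyadic (Rademacher-type) martingale whose local variance depends only on the depth: set $\var_{I}=1$ for $|I| \in (2^{-2m_j}, 2^{-m_j}]$ and $\var_{I}=0$ otherwise, with $m_j=2^{2^j}$. Then $\langle S\rangle_n$ is deterministic and $\limsup_n \langle S\rangle_n/n = 1/2$, but the ``on'' blocks are so sparse that a Borel--Cantelli computation shows $\limsup_n |S_n(x)|/\sqrt{n\log\log n}=0$ a.s.\ — the exceptional times witnessing Stout's $\limsup = 1$ almost surely never coincide with the super-exponentially rare scales at which $\langle S\rangle_n/n$ peaks. (Your auxiliary claim that boundedness $\langle S\rangle_n = O(n)$ forces $\log\log\langle S\rangle_n/\log\log n \to 1$ on $\{\langle S\rangle_n\to\infty\}$ is also false when $\langle S\rangle_n$ grows sub-polynomially, though that error only affects cosmetics of the upper bound.)

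The upshot is that a black-box application of Stout's lemma gives only the pointwise sandwich
$\liminf_n \langle S\rangle_n/n \le \tfrac12\limsup_n |S_n|^2/(n\log\log n) \le \limsup_n \langle S\rangle_n/n$,
and when the gap between the $\liminf$ and $\limsup$ of $\langle S\rangle_n/n$ exceeds $\varepsilon$ this sandwich does not pinch the two LIL constants to within $\varepsilon$. To close the gap one cannot merely cite Stout as a black box; one needs to re-run the second (conditional) Borel--Cantelli step inside the LIL proof along geometrically spaced stopping times that are simultaneously constrained to land where $\langle S\rangle_n/n$ is near its $\limsup$, and to handle the fact that that target set is not adapted. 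As written, the proposal has a genuine gap at precisely this step.
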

The above lemma also holds for the other 
characteristics discussed in Theorem \ref{local-variance}.

\subsection{Some useful facts}
For future reference, we record two martingale estimates. Assume for simplicity that $S$ is a real-valued dyadic martingale with $S_{[0,1]} = 0$ and
$|\Delta_j(x)| \le 1$. The {\em sub-Gaussian estimate} says that
\begin{equation}
\label{eq:subgaussian}
\bigl | \{x \in [0,1] : |S_n| > t \} \bigr | \le e^{-c t^2/n}.
\end{equation}
for some $c > 0$. The sub-Gaussian estimate is a consequence of a more general statement, see \cite[Proposition 2.7]{makarov90}. 
Another proof is given in \cite{hedenmalm2}.
Integrating (\ref{eq:subgaussian}), we obtain bounds for the moments
\begin{equation}
\label{eq:moments}
\frac{1}{\Gamma(p+1)} \int_0^1 |S_n|^{2p} \, dx \le (Cn)^p, \qquad p \ge 0.
\end{equation}

\section{Bloch martingales}

In this section, we review Makarov's construction of the dyadic martingale associated to a Bloch function. We then give an approximate formula
for the local variance. 
For convenience, we work in the upper half-plane where the computations are slightly simpler.
Therefore, let us imagine that $b$ is a holomorphic function on $\mathbb{H}$ with 
\begin{equation}
\label{eq:def-bloch}
\|b\|_{\mathcal B(\mathbb{H})} = (1/2) \, \sup_{z \in \mathbb{H}}\, y \cdot |b'(z)| \le 1.
\end{equation}
Here, we assume that $b$ lies in the Bloch unit ball in order to not have to write the Bloch norm all the time.

A dyadic interval $I = [x_1, x_2] \subset [0,1]$ defines a {\em 1-box}
$$
\square_I \, = \, \bigl \{ w : \re w \in [x_1, x_2], \, \im w \in [(x_2 - x_1)/2, \, x_2 - x_1] \bigr \}
$$
in the upper half-plane. The {\em $n$-box} $\square_I^n$ is defined as the union of 1-boxes associated to $I$ and to all dyadic intervals
 contained in $I$ of length at least $2^{-n+1}|I|$. For instance,
 $$
\square_{[0,1]}^n \, = \, \bigl \{ w : \re w \in [0, 1], \, \im w \in [2^{-n}, 1] \bigr \}.
$$
We use $z_I = (x_1+x_2)/2 + (x_2 - x_1)i$ to denote the midpoint of the top edge of $\square_I$.
Following Makarov \cite{makarov90}, let  $B$ be the complex-valued dyadic martingale given by 
\begin{equation}
B_I = \lim_{y \to 0^+} \int_I b(x+iy) dx.
\end{equation}
Makarov showed that the above limit exists and satisfies
\begin{equation}
|b(z_I) - B_I | = \mathcal O(1).
\end{equation}
In particular, 
\begin{equation}
\label{eq:bloch-martingale-condition}
|B_I - B_J| \le C,
\end{equation} whenever $I, J$ are adjacent dyadic
intervals of the same size. This is stronger than simply saying that $B$ has bounded increments because $I, J$ may have different parents.
Makarov \cite{makarov90} observed that the converse also holds: if a dyadic martingale on $[0,1]$ satisfies the above property, it comes from some Bloch function $b(z)$. It is therefore natural to refer to martingales satisfying (\ref{eq:bloch-martingale-condition}) as  {\em Bloch martingales}.

 One may view dyadic martingales abstractly, defined on the dyadic tree. The notion of a Bloch martingale; however, requires an identification of the dyadic tree with $[0,1]$. One useful fact to keep in mind is:
 
\begin{lemma}[Transmutation principle]
For an abstract dyadic martingale with bounded increments, there is an embedding to $[0,1]$ so that it is Bloch.
\end{lemma}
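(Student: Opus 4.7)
My approach is to construct the embedding by choosing, at each internal node of the abstract binary tree, a left/right orientation for its two children. The embedding is completely determined by these choices, since it must preserve the parent-child structure and map level-$n$ nodes bijectively to dyadic intervals of length $2^{-n}$. The goal is to pick these orientations so that the resulting martingale $B_I = V_{\phi^{-1}(I)}$ satisfies the Bloch bound $|B_I - B_J| \le C$ for all adjacent same-size dyadic intervals, where $C$ depends only on the increment bound $C_0 = \sup_N |V_N - V_{\mathrm{parent}(N)}|$.

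The plan is to build the orientations recursively by depth, maintaining the inductive invariant that at each level $n$, the consecutive martingale values in the induced left-to-right ordering satisfy $|V_{N_k^{(n)}} - V_{N_{k+1}^{(n)}}| \le C$. At the step from level $n$ to level $n+1$, each level-$n$ node $N_k^{(n)}$ has abstract children carrying values $V_{N_k^{(n)}} \pm a_k$ for some $a_k \in [0, C_0]$, and the orientation choice $\epsilon_k \in \{\pm 1\}$ selects which child sits on the right. The $k$-th boundary jump at level $n+1$ then takes the form $\delta_k + \epsilon_k a_k + \epsilon_{k+1} a_{k+1}$, where $\delta_k$ is the corresponding level-$n$ jump with $|\delta_k| \le C$. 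I would solve this chain of sign constraints by a greedy left-to-right sweep, using the identity $\min_\epsilon |\gamma + \epsilon a| = \bigl||\gamma|-|a|\bigr|$ to choose each $\epsilon_{k+1}$ so as to minimize the $k$-th boundary jump given the already fixed $\epsilon_k$.

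The main obstacle is preventing the constant $C$ from drifting upward: the naive greedy bound only gives $C \mapsto C + O(C_0)$ per level, because each $\epsilon_k$ couples two adjacent boundaries and cannot freely optimize both. I would overcome this through a dichotomy argument — whenever the greedy step cannot shrink the residual, the next child spread $|a_{k+1}|$ must be large enough to compensate at the following step, and the amortised growth stays bounded — possibly combined with a corrective backward sweep that reverses a few orientations to reset the accumulated error. If the combinatorial chase proves too delicate, a fallback is to construct the Bloch function directly on $\mathbb{H}$ as a sum of localized atoms $\psi_N$ indexed by the tree nodes, with coefficients encoding the martingale increments and with Bloch norm controlled by the bounded overlap of these atoms; the resulting $b$ then induces, via Makarov's construction, a dyadic Bloch martingale matching $V$ under a natural readout.
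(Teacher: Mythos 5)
The paper itself leaves this proof as an exercise, so there is no official argument to compare against; what follows compares your proposal to what the argument almost certainly has to be. Your framework is the right one: assign orientations level by level, maintaining the invariant that consecutive same-level martingale values differ by at most a fixed $C$. But the greedy left-to-right sweep is the wrong execution, and the drift you worry about is a genuine failure mode, not a nuisance to be amortised away. Concretely, take $C=4C_0$, $a_{k-1}=a_k=C_0$, $\delta_{k-1}=-2C_0$, $a_{k+1}=0$, $\delta_k=4C_0$ (all compatible with the invariant at the previous level). The greedy step at the $(k-1)$-st boundary selects $\epsilon_k=-1$, and then the $k$-th boundary equals $5C_0 > 4C_0$ no matter what $\epsilon_{k+1}$ is. Your ``dichotomy'' claim — that a stuck step forces the next spread $a_{k+1}$ to be large — is simply false (here $a_{k+1}=0$), and the ``corrective backward sweep'' is not specified enough to evaluate. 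The fallback of summing smooth atoms faces the same obstruction in disguise: at a fixed dyadic abscissa, atoms from logarithmically many scales all have a jump there, and controlling that sum is exactly the combinatorial problem you started with.

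What you are missing is a parity observation that removes the coupling entirely, so no greedy sweep or amortisation is needed. The level-$(n+1)$ consecutive differences alternate: at odd positions they are within-parent, equal to $2\epsilon_j a_j$, hence automatically of absolute value $\le 2C_0$ regardless of orientation; at even positions they are between-parent, equal to $\delta_j-\epsilon_j a_j-\epsilon_{j+1}a_{j+1}$ (note the sign: your formula has a $+$, immaterial since $\epsilon\in\{\pm1\}$). Crucially, $\delta_j$ itself is a level-$n$ within-parent difference when $j$ is odd, so $|\delta_j|\le 2C_0$ and the crude bound already gives $\le 4C_0$; and when $j$ is even, the pair $(\epsilon_j,\epsilon_{j+1})$ appears in no other even-position constraint, because the neighbouring constraints at positions $2j\pm 1$ are vacuous. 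So for each even $j$ you may independently set $\epsilon_j,\epsilon_{j+1}$ to match the sign of $\delta_j$, which yields $|\delta_j-\epsilon_j a_j-\epsilon_{j+1}a_{j+1}|\le\max(|\delta_j|,\,a_j+a_{j+1})\le 4C_0$. The invariant ``all consecutive level-$n$ differences are $\le 4C_0$'' then propagates to every level with no drift, proving the lemma (for real-valued martingales) with Bloch constant $4C_0$. One caveat, which applies to both your proposal and this fix: the paper's Bloch martingales are complex-valued, and a single orientation bit per node must serve the real and imaginary coordinates simultaneously, so the argument does not close coordinatewise; the complex case needs an additional idea.
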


The proof is simple and we leave it as an exercise to the reader.

\begin{theorem}
 \label{variance-thm}
 Suppose $I \subset [0,1]$ is a dyadic interval and $I_1, I_2, \dots, I_{2^n}$ are its dyadic grandchildren of length $2^{-n}|I|$. Then,
\begin{equation}
\label{eq:ltg1}
\frac{1}{\log 2} \cdot \var_I^n B = \fint_{\square_I^n} \, \biggl | \frac{2b'}{\rho_{\mathbb{H}}}(z) \biggr |^2 \, \frac{|dz|^2}{y} +
 \mathcal O \bigl (\|b\|^2_{\mathcal B}/\sqrt{n} \bigr ).
\end{equation}
Furthermore, we have the ``complexification'' relations
\begin{equation}
\label{eq:ltg2}
\var_I^n (\re B) = \frac{1}{2} \cdot \var_I^n B +  \mathcal O \bigl (\|b\|^2_{\mathcal B}/\sqrt{n} \bigr)
\end{equation}
and
\begin{equation}
\label{eq:ltg3}
\var_I^n (\re B,\,\im B) =  \mathcal O \bigl (\|b\|^2_{\mathcal B}/\sqrt{n} \bigr ).
\end{equation}
\end{theorem}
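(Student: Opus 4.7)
My plan is to establish \refeq{eq:ltg1} by combining martingale orthogonality with a local Littlewood--Paley identity on each dyadic box $\square_J$, then controlling the cumulative error through a mix of boundary telescoping and Stout/sub-Gaussian cancellation. The complexification relations \refeq{eq:ltg2} and \refeq{eq:ltg3} will follow from the same machinery plus the Cauchy--Riemann equations.

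\emph{Step 1 (reduction to a per-box identity).} By iterated application of the parallelogram law,
\begin{equation*}
\frac{1}{2^n}\sum_{i=1}^{2^n}|B_{I_i}-B_I|^2 \,=\, \sum_{m=0}^{n-1}\frac{1}{2^m}\sum_{\substack{J\subset I\\ |J|=2^{-m}|I|}}\var_J B,
\end{equation*}
so $n|I|\,\var_I^n B = \sum_{m,J}|J|\,\var_J B$. On the right of \refeq{eq:ltg1}, the integrand equals $4y|b'|^2$ after simplifying $|2b'/\rho_{\mathbb{H}}|^2\cdot|dz|^2/y$, and the integral splits cleanly as $\sum_{m,J}\int_{\square_J}4y|b'|^2\,dx\,dy$; a direct computation shows the hyperbolic volume of $\square_I^n$ equals $n|I|\log 2$. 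Hence \refeq{eq:ltg1} is equivalent to
\begin{equation*}
\sum_{m,J}\Bigl(|J|\,\var_J B - \int_{\square_J}4y|b'|^2\,dx\,dy\Bigr) \,=\, \mathcal O\bigl(\sqrt n\,|I|\,\|b\|_{\mathcal B}^2\bigr).
\end{equation*}

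\emph{Step 2 (per-box Littlewood--Paley identity).} For a single $J$, I would use the martingale relation $\var_J B = \tfrac14|B_{J_1}-B_{J_2}|^2$, and lift the defining boundary integrals of $B_{J_i}$ from $y\to 0^+$ up to $y=|J|/2$ by the fundamental theorem of calculus in $y$; holomorphicity of $b$ turns the correction into a vertical-edge contour integral of $b$ beneath $J_i$. Squaring $B_{J_1}-B_{J_2}$ and invoking integration by parts against the weight $y$ on $\square_J$ with $\Delta|b|^2 = 4|b'|^2$ produces
\begin{equation*}
|J|\,\var_J B \,=\, \int_{\square_J}4y|b'|^2\,dx\,dy + E_J^{\mathrm{tel}} + E_J^{\mathrm{res}},
\end{equation*}
where $E_J^{\mathrm{tel}}$ collects line integrals of $|b|^2$ and $b\bar b'$ on $\partial\square_J$ and $E_J^{\mathrm{res}}$ is a mixed remainder that is $\mathcal O(\|b\|_{\mathcal B}^2)$ per box.

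\emph{Step 3 (cancellation).} The vertical-edge parts of $E_J^{\mathrm{tel}}$ for two horizontally adjacent boxes at a given level cancel on the shared edge; the horizontal-edge parts at the bottom of level $m$ cancel with the top of level $m+1$. What remains is supported on $\partial\square_I^n$ and is $\mathcal O(|I|\,\|b\|_\mathcal{B}^2)$ by direct Bloch bounds. The residuals $E_J^{\mathrm{res}}$ should be organized as the jumps of an auxiliary dyadic martingale with bounded increments, in the spirit of the auxiliary martingale $T$ in the proof of Theorem~\ref{local-variance}(i); the sub-Gaussian estimate \refeq{eq:subgaussian} (equivalently Stout's Lemma~\ref{stout-thm}) then yields $\bigl|\sum_{m,J}E_J^{\mathrm{res}}\bigr| = \mathcal O(\sqrt n\,|I|\,\|b\|_\mathcal{B}^2)$, matching the claimed error after division by $n|I|$.

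\emph{Step 4 (complexification and main obstacle).} For \refeq{eq:ltg2} and \refeq{eq:ltg3}, repeating the local identity with $B$ replaced by $\re B$, by $\im B$, and by the pair $(\re B,\im B)$ produces the same $|b'|^2$-integrand with relative weights $\tfrac12,\tfrac12$, and $0$ respectively, reflecting the Cauchy--Riemann relations $|\nabla\re b|^2=|\nabla\im b|^2=|b'|^2$ and $\nabla\re b\cdot\nabla\im b=0$; the error analysis from Step~3 carries over verbatim. The principal technical obstacle is exactly this cancellation step: a naive bound makes each per-box error only $\mathcal O(\|b\|_\mathcal{B}^2)$, so the $\sqrt n$-improvement depends on the right integration-by-parts identity that cleanly separates the telescoping boundary portion from the martingale-difference residual, thereby allowing the sub-Gaussian tools of Section~\ref{sec:probability} to take over.
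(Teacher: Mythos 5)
Your Step~1 reduction (martingale orthogonality to decompose $\var_I^n B$ across levels, and splitting the hyperbolic integral over the $1$-boxes tiling $\square_I^n$) is algebraically correct, and your observation in Step~4 that Cauchy--Riemann gives $|\nabla\re b|^2=|\nabla\im b|^2=|b'|^2$ and $\nabla\re b\cdot\nabla\im b=0$ is exactly the right input for the complexification relations. However, there is a genuine gap in Steps~2--3 that the paper's actual proof avoids.

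The issue is how you propose to recover the $\mathcal O(\|b\|^2_{\mathcal B}/\sqrt n)$ error. The paper applies Green's identity \emph{once}, on the full region $\square_I^n$, with the single fixed reference point $z_I$, taking $u=y$ and $v=|b-b(z_I)|^2$. The resulting boundary term is estimated on $\partial\square_I^n$, where the only non-trivial piece is the bottom edge; there the Bloch bound $y|b'|\lesssim\|b\|_{\mathcal B}$ together with a Cauchy--Schwarz estimate
$$
\frac{1}{2^n}\sum_{j}|B_{I_j}-B_I|\lesssim\sqrt{n}\,\|b\|_{\mathcal B}
$$
gives the $\mathcal O(\sqrt n)$ error directly; similarly the discretization step (comparing $\int_{\mathrm{bot}}|b-b(z_I)|^2\,ds$ with $\tfrac{1}{2^n}\sum_j|B_{I_j}-B_I|^2$) has a $\sqrt n$-error for the same Cauchy--Schwarz reason. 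Crucially, the comparison between $b$ and $B$ happens only at the bottom level, once. In your version, the discretization happens at every one of the $n$ levels, and a per-box residual of size $\mathcal O(\|b\|^2_{\mathcal B})$ summed (with weight $|J|$, so weight summing to $|I|$ per level) gives $\mathcal O(n)$, which is the same order as the main term. You are aware of this and propose to beat it by casting the residuals as jumps of an auxiliary martingale and invoking Stout or the sub-Gaussian bound. That appeal does not do what you need: Stout's lemma and \refeq{eq:subgaussian} control the almost-everywhere growth of a martingale $T_n(x)$ in $x$, whereas the quantity you must bound is the deterministic average $\sum_{m,J}|J|E_J^{\mathrm{res}}$. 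If your $E_J^{\mathrm{res}}$ really were conditionally mean-zero martingale increments, that average would vanish identically by the martingale property and no probabilistic large-deviation input would be needed; if they are not conditionally mean-zero, Stout simply does not apply. The $\sqrt n$ in the theorem is not coming from a CLT-type square-root cancellation over $n$ levels --- it comes from Cauchy--Schwarz applied to the $2^n$ bottom-level jumps.

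There is also a reference-point inconsistency hidden in Step~2: to expose $\var_J B=\tfrac14|B_{J_1}-B_{J_2}|^2$, the natural choice of $v$ on $\square_J$ is $|b-b(z_J)|^2$ with the $J$-dependent base point $z_J$; but then the boundary terms on shared edges of adjacent boxes do not cancel, because the integrands are different functions on the two sides. If instead you fix the reference $z_I$ so that the boundary terms do telescope to $\partial\square_I^n$, the per-box Green computation no longer produces $\var_J B$ on the left side. The paper's single-region Green's identity with a fixed reference sidesteps both the telescoping bookkeeping and the $n$-fold accumulation of discretization errors, and is the approach you should follow.
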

\begin{proof}
Due to scale invariance, we only need to consider the case when $I = [0,1]$ and $\|b\|_{\mathcal B(\mathbb{H})} = 1$.
Since $B$ has bounded jumps, 
\begin{equation*}
\langle B \rangle_n \lesssim n \qquad \text{and} \qquad
\frac{1}{2^n} \sum_{j=1}^{2^n} |B_{I_j} - B_I|^2  \lesssim n.
\end{equation*}
The Cauchy-Schwarz inequality gives
\begin{equation}
\frac{1}{2^n} \sum_{j=1}^{2^n} |B_{I_j} - B_I|  \lesssim \sqrt{n}.
\end{equation}
For $z \in I_j$, the Bloch property $|b(z)-b(z_{I_j})| = \mathcal O(1)$ implies
\begin{equation}
\label{eq:l1norm}
\int_{\bottom( \square_I^n)} |b(z)-b(z_I)| \, ds \lesssim \sqrt{n},
\end{equation}
where we integrate over the bottom side of $\square_I^n$.
Similarly,
\begin{equation}
\label{eq:sqf-appx}
\biggl |  \int_{\bottom( \square_I^n)} |b(z)-b(z_I)|^2 \, ds  -  \frac{1}{2^n} \sum_{j=1}^{2^n} |B_{I_j} - B_I|^2
\biggr |
 \, \lesssim \, \sqrt{n}.
\end{equation}
Following \cite{nicolau}, we apply Green's identity
$$
\int_\Omega (u \Delta v - v \Delta u) dxdy = \int_{\partial \Omega} (u \cdot \partial_{\mathbf{n}} v - v \partial_{\mathbf{n}} u) ds,
$$
where $\partial_{\mathbf{n}}$ refers to differentiation with respect to the normal vector and $ds$ denotes integration with respect to arc length.
The choice $$\Omega = \square_I^n, \quad u = y, \quad v = |b(z)-b(z_I)|^2$$ yields
\begin{equation*}
\int_{\square_I^n} y \cdot  |2b'(z)|^2 \, dxdy  =  \int_{\square_I^n} y \cdot \Delta |b(z) - b(z_I)|^2 \, dxdy, \qquad \qquad \quad
\end{equation*}
\begin{equation*}
\quad \qquad \qquad =  - \int_{\partial \square_I^n} \partial_{\mathbf{n}} y \cdot |b(z) - b_I(z)|^2 \, ds + \int_{\partial \square_I^n} y \cdot  \partial_{\mathbf{n}} |b(z)-b(z_I)|^2 \, ds.
\end{equation*}
 It is evident that
\begin{equation}
- \int_{\partial \square_I^n} \partial_{\mathbf{n}}  y \cdot |b(z)-b(z_I)|^2 \, ds =  \int_{\bottom( \square_I^n)} |b(z)-b(z_I)|^2 \, ds - 
\mathcal O(1).
\end{equation}
For the error term, 
$$
\int_{\partial \square_I^n} y \cdot \partial_{\mathbf{n}} |b(z)-b(z_I)|^2 \, ds \, \lesssim \,  \int_{\partial \square_I^n} |b(z)-b(z_I)| \, ds.
$$
From the definition of a Bloch function (\ref{eq:def-bloch}), the integral of $|b(z)-b(z_I)|$ over the top, left and right sides of $\partial \square_I^n$ is $\mathcal O(1)$, while according to 
(\ref{eq:l1norm}), the integral over the bottom side is $\mathcal O(\sqrt{n})$.
Summarizing, we see that
\begin{equation}
\fint_{\square^n} \biggl |\frac{2b'(z)}{\rho_{\mathbb{H}}} \biggr |^2 \, \frac{|dz|^2}{y} = \frac{1}{n \log 2} \int_{\bottom( \square^n)} |b(z)-b(z_I)|^2 \, ds +
 \mathcal O (1/\sqrt n).
\end{equation}
Combining with (\ref{eq:sqf-appx}) gives another error of $\mathcal O (1/\sqrt n)$ and proves (\ref{eq:ltg1}). For (\ref{eq:ltg2}), it suffices to 
repeat the argument with $u = \re b$ (in place of $b$) and use $|b'|^2 = 2|\nabla u|^2$, while (\ref{eq:ltg3}) follows from polarization.
 \end{proof}

\section{Applications to Bloch functions} 
 Let  $\bH_k$ denote the class  
of conformal maps $f: \mathbb{H} \to \mathbb{C}$ which admit a $k$-quasiconformal extension
to the plane and fix the points $0,1,\infty$. As discussed in the previous section, for $f \in \mathbf{H}_k$, the associated Bloch function
 $b_f = \log f' \in \mathcal B(\mathbb{H})$ defines a dyadic martingale $B$ on $[0,1]$. We define the
 asymptotic variance, LIL constant and integral means of $b$ as
 $$
 \frac{1}{\log 2} \cdot \sigma^2(B),  \quad \frac{1}{\log 2} \cdot C_{\LIL}^2(B),  \quad \frac{1}{\log 2} \cdot \beta_B(t),
 $$
 respectively. The factor $\chi = \log 2$ comes from the height of the boxes in the dyadic grid (as measured in the hyperbolic metric). It plays the role of the  Lyapunov exponent, cf. \cite[Theorem 2.7]{mcmullen}. If one instead works with the $p$-adic grid, then the normalizing factor would be $\chi = \log p$.
 
In purely function-theoretic terms, the 
 asymptotic variance of a Bloch function $b \in \mathcal B(\mathbb{H})$ is given by
\begin{align}
 \sigma^2_{[0,1]}(b) & = \limsup_{y \to 0^+} \, \frac{1}{|\log y|}  \int_0^1 |b(x+iy)|^2 dx, \\ 
 \label{eq:avar-H}
& = \limsup_{h \to 0^+} \, \frac{1}{|\log h|} \int_h^1 \int_0^1 \biggl |\frac{2b'(x+iy)}{\rho_{\mathbb{H}}}\biggr |^2 \, \frac{|dz|^2}{y}.
\end{align}
More generally, in \cite[Section 6]{mcmullen}, McMullen  showed that one can compute the asymptotic variance by examining C\'esaro averages of integral means that
involve
higher order derivatives. 

 It is not difficult to show that the expressions
 $$
 \sup_{f \in \bH_k} \sigma^2(b_f), \quad  \sup_{f \in \bH_k} C_{\LIL}^2(b_f), \quad  \sup_{f \in \bH_k} \beta_{b_f}(t),
 $$
coincide with their analogues for the class $\bS_k$ from the introduction.

 \begin{theorem}[Complexification]
For any Bloch function $b \in \mathcal B(\mathbb{H})$,
 \label{lil-circular}
$$
\sigma^2(\re b) = (1/2) \cdot \sigma^2(b), \qquad C_{\LIL}(\re b) = C_{\LIL}(b).
$$
\end{theorem}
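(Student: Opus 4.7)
My plan is to derive both equalities from the complexification relations (\ref{eq:ltg2})--(\ref{eq:ltg3}) of Theorem \ref{variance-thm}, combined with the Stout-based framework from Section 2.

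\emph{Asymptotic variance.} The key identity is that for any real-valued martingale $S$ with $S_{[0,1]} = 0$, one has $\var_{[0,1]}^n S = \frac{1}{n}\int_0^1 |S_n|^2\,dx$. Applying (\ref{eq:ltg2}) with $I = [0,1]$ to $S = \re B$ yields
\begin{equation*}
\frac{1}{n}\int_0^1 |\re B_n|^2\,dx \;=\; \frac{1}{2n}\int_0^1 |B_n|^2\,dx + \mathcal O(\|b\|_\mathcal B^2/\sqrt n).
\end{equation*}
Taking $\limsup_n$ and rescaling by the factor $\chi = \log 2$ introduced at the start of Section 4, we obtain $\sigma^2(\re b) = \tfrac{1}{2}\sigma^2(b)$.

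\emph{LIL constant.} The inequality $C_{\LIL}(\re b) \le C_{\LIL}(b)$ is immediate from $|\re b| \le |b|$. For the reverse, I will run a pigeonhole argument on the argument of $B_n(x)$. Given $\varepsilon > 0$, choose equally spaced angles $\alpha_1, \dots, \alpha_N \in [0, 2\pi)$ with $N \ge \pi/\varepsilon$, so that every $z \in \mathbb{C}$ satisfies $|z| \le \sec\varepsilon \cdot \max_j \re(e^{-i\alpha_j}z)$. Applied to $z = B_n(x)$ and passing to $\limsup_n$ and $\esssup_x$, this yields
\begin{equation*}
C_{\LIL}(B) \;\le\; \sec\varepsilon \cdot \max_j\, C_{\LIL}\bigl(\re(e^{-i\alpha_j}B)\bigr).
\end{equation*}
Letting $\varepsilon \to 0$ then reduces everything to showing $C_{\LIL}(\re(e^{-i\alpha}B)) = C_{\LIL}(\re B)$ for each individual $\alpha$.

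This last claim is where I expect the real work to lie. The input is (\ref{eq:ltg2}) applied to both $b$ and $e^{-i\alpha}b$, together with the scale-invariance $\var_I^n(e^{-i\alpha}B) = \var_I^n B$; this gives the uniform bound $\sup_I |\var_I^n \re(e^{-i\alpha}B) - \var_I^n \re B| = \mathcal O(1/\sqrt n)$. To upgrade this $n$-step closeness to equality of LIL constants -- which Lemma \ref{same-variance} handles only via $1$-step variances -- I plan to pass to the $2^n$-adic coarsening $\tilde S_k := S_{kn}$, whose $1$-step variance equals precisely $n\cdot\var_I^n S$. Lemma \ref{same-variance} at this coarser scale will then give $|C_{\LIL}^2(\tilde S_1) - C_{\LIL}^2(\tilde S_2)| = \mathcal O(\sqrt n)$. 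The bounded-increment property of Bloch martingales ensures that within any window of $n$ consecutive dyadic levels the martingale varies by at most $\mathcal O(n)$, which is lower order than $\sqrt{k \log\log k}$ as $k \to \infty$; a short verification then gives $C_{\LIL}(\tilde S) = \sqrt n\, C_{\LIL}(S)$. Dividing by $n$ yields $|C_{\LIL}^2(\re B) - C_{\LIL}^2(\re(e^{-i\alpha}B))| = \mathcal O(1/\sqrt n)$, and sending $n \to \infty$ closes the loop. The main obstacle is precisely this bridge between $n$-step averaged variances (the natural output of Theorem \ref{variance-thm}) and $1$-step variances (the natural input to the LIL analysis); everything else is routine.
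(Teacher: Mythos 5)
Your proof is correct and follows essentially the same route as the paper's: the variance identity is read off from \eqref{eq:ltg2}, and the LIL equality is handled by coarsening the dyadic martingale to the $2^n$-adic grid so that the $\mathcal O(1/\sqrt n)$ discrepancy in the $n$-step local variances becomes an $\mathcal O(\sqrt n)$ discrepancy in one-step variances, which Lemma \ref{same-variance} converts into an $\mathcal O(1/\sqrt n)$ discrepancy in $C_{\LIL}^2$ after undoing the $\sqrt n$ rescaling -- this is precisely what the paper compresses into ``view $\re B$ and $\re e^{i\theta}B$ as $p$-adic martingales with $p=2^n$ large.'' The only genuine difference is that you replace the paper's (unjustified) appeal to continuity of $\theta\mapsto C_{\LIL}(\re e^{i\theta}B)$ by a finite $\varepsilon$-net with a $\sec\varepsilon$ loss, which is a cleaner way to legitimately interchange $\limsup_n$ with the supremum over angles, and is arguably an improvement in rigor rather than a change of strategy.
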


\begin{proof}
(i) The first statement follows from (\ref{eq:ltg2})  and the definitions
\begin{equation}
\label{eq:complexification1}
 \frac{1}{n} \int_0^1 \bigl | \re B_n(x) - \re B_{[0,1]} \bigr | ^2 \, dx = \var^n_{[0,1]}[\re B] 
\end{equation}
and 
\begin{equation}
\label{eq:complexification2}
 \frac{1}{n} \int_0^1 | B_n(x) - B_{[0,1]} |^2 \, dx = \var^n_{[0,1]}[B]. 
\end{equation}

(ii) 
For the second statement, note that the function $\theta \to C_{\LIL}(\re e^{i\theta} B)$ is continuous and
$$
C_{\LIL}(B) =  \sup_{\theta \in [0,2\pi)} C_{\LIL}(\re e^{i\theta} B).
$$
We  must therefore show $C_{\LIL}(\re e^{i\theta} B) \le C_{\LIL}(\re B)$, for any $\theta \in [0,2\pi)$. However, if view $\re B$ and $\re e^{i\theta} B$ as
 $p$-adic martingales with $p = 2^n$
large, then by (\ref{eq:ltg2}), their local variances are approximately equal. The assertion now follows from 
Lemma \ref{same-variance}.
\end{proof}
\begin{remark}
For lacunary series, the equality $C_{\LIL}(\re b) = C_{\LIL}(b)$ goes back to the 1959 work of M.~Weiss \cite{weiss}.
\end{remark}

\section{The Box Lemma}

The proofs of Theorems \ref{dynamical-connections} and \ref{main-thm} are now completed by the {\em Box Lemma} from \cite{qcdim} which describes the average non-linearity $n_f := f''/f' = (\log f')'$ of conformal mappings:

\begin{lemma} 
\label{boxcart-global}
{\em (i)} Fix $0 < k < 1$.
Given $\varepsilon > 0$, there exists $n \ge 1$ sufficiently large so that for any $n$-box $\square_I^n \subset \mathbb{H}$
and any conformal map $f \in \mathbf{H}_k$,
 \begin{equation}
 \label{eq:boxcart2}
 \fint_{\square_I^n} \biggl |\frac{2n_f}{\rho_{\mathbb{H}}}(z) \biggr |^2 \, \frac{|dz|^2}{y} \, < \, \Sigma^2(k) + \varepsilon.
 \end{equation}
 
 {\em (ii)} Conversely, for any $\varepsilon > 0$, there exists a conformal map $f \in \mathbf{H}_k$, whose dilatation $\bel f := \overline{\partial} f/\partial f$ is periodic with respect
to the $2^n$-adic grid for some $n \ge 1$,
and which satisfies
  \begin{equation}
 \label{eq:boxcart3}
 \fint_{\square_I^n} \biggl |\frac{2n_f}{\rho_{\mathbb{H}}}(z) \biggr |^2 \, \frac{|dz|^2}{y} \, > \, \Sigma^2(k) - \varepsilon,
 \end{equation}
  on every $n$-box $\square_I^n$.
\end{lemma}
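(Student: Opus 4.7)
The plan is to reduce both parts to statements about the local variance of the Bloch martingale $B_f$ via Theorem \ref{variance-thm}, and then exploit the definition $\Sigma^2(k) = \sup_{f \in \bS_k} \sigma^2(b_f)$ directly. Up to an additive $O(1/\sqrt{n})$ coming from the pre-Schwarzian $n_\varphi$ of the M\"obius rescaling $\varphi: \mathbb{H} \to \mathbb{H}$ that carries $\square_{[0,1]}^n$ onto $\square_I^n$, we may normalize so that $I = [0,1]$; the rescaling preserves both the class $\bH_k$ (via $\mu_{f\circ\varphi} = \varphi^{\ast}\mu_f$) and the Bloch norm of $b_f$. Under this normalization, Theorem \ref{variance-thm} identifies the box integrals in (\ref{eq:boxcart2}) and (\ref{eq:boxcart3}) with $(1/\log 2)\,\var_{[0,1]}^n B_f + O(1/\sqrt{n})$.

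For part (i), I would argue by contradiction. Suppose that for some $\varepsilon > 0$ there exist $f_n \in \bH_k$ and $n \to \infty$ with $\var_{[0,1]}^n B_{f_n} > (\Sigma^2(k) + \varepsilon)\log 2$. Fix one such large $n$ and construct a new Beltrami coefficient $\mu_g$ by pasting affinely rescaled copies of $\mu_{f_n}|_{\square_{[0,1]}^n}$ onto every $n$-box $\square_J^n \subset \mathbb{H}$, at every dyadic scale, so that $\mu_g$ is invariant under the dynamics $z \mapsto 2^n z$ modulo integer translations. Since $\|\mu_g\|_\infty \le k$, the normalized solution $g$ lies in $\bH_k$. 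By the self-similar construction, every $n$-generation local variance $\var_J^n B_g$ equals $\var_{[0,1]}^n B_{f_n}$ up to seam errors absorbed by the $O(1/\sqrt{n})$ term of Theorem \ref{variance-thm}. Viewing $B_g$ as a $2^n$-adic martingale and applying Theorem \ref{local-variance}(ii) then forces $\sigma^2(b_g) \ge \Sigma^2(k) + \varepsilon/2$, contradicting the definition of $\Sigma^2(k)$.

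For part (ii), I would reverse the construction. Pick a near-extremal $h \in \bS_k$ with $\sigma^2(b_h) > \Sigma^2(k) - \varepsilon/3$. Since $\sigma^2(b_h) = (1/\log 2)\limsup_n (1/n)\int_0^1 |B_{h,n}|^2\,dx$, there is a large $n$ with $\var_{[0,1]}^n B_h > (\Sigma^2(k) - \varepsilon/2)\log 2$. Now define $\mu_f$ as the $2^n$-adic periodic extension of $\mu_h|_{\square_{[0,1]}^n}$ across $\mathbb{H}$. The resulting $f \in \bH_k$ has $2^n$-adic periodic dilatation by construction, and the box integral equals $(1/\log 2)\,\var_{[0,1]}^n B_h + O(1/\sqrt{n})$ uniformly over every $n$-box, delivering (\ref{eq:boxcart3}) after absorbing $O(1/\sqrt{n})$ into $\varepsilon/2$.

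The hard part will be controlling the tile-seam corrections in both gluings. Along the boundaries between adjacent copies of $\mu_{f_n}|_{\square_{[0,1]}^n}$, the Beltrami coefficient of the glued map is not quite self-similar, so $B_g$ inherits boundary corrections not present in $B_{f_n}$. These corrections scale with the perimeter-to-area ratio of the tiles, which yields exactly $O(1/\sqrt{n})$ after normalization; this matches the error term of Theorem \ref{variance-thm} and explains why the Box Lemma must be formulated at the scale of $n$-boxes: averaging over $n$ hyperbolic levels is precisely what is needed to absorb the seam contributions.
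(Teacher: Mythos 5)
Your overall strategy — translate the box integrals into local variances of the Bloch martingale via Theorem \ref{variance-thm}, then pass between ``one box is large/small'' and ``the asymptotic variance is large/small'' by periodizing the Beltrami coefficient — is very much in the spirit of the argument in \cite{qcdim}, which the present paper cites for this lemma without reproving it. But the attempt as written has a real gap at its central step, and a couple of smaller inaccuracies.

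The critical missing ingredient is the \emph{nonlinear localization} of the map $\mu \mapsto n_f$. In both parts you periodize $\mu_{f}|_{\overline{\square}_I^n}$ (note: the dilatation lives in the \emph{lower} half-plane, so it is the reflected box that is tiled) and then assert ``$\var_J^n B_g$ equals $\var_{[0,1]}^n B_{f}$ up to seam errors''. For the infinitesimal Lemma \ref{boxcart} this is cheap because the Beurling transform is linear and has a $|z-\zeta|^{-3}$ kernel, so changing $\mu$ outside a box changes $(\mathcal S\mu)'$ inside by a quantity that decays with hyperbolic distance. For the nonlinear Lemma \ref{boxcart-global} you must instead invoke the composition rule: if $\mu_g$ and $\mu_{f}$ (after affine alignment) agree on a box, then $g = \psi \circ f$ there for some $\psi$ conformal on $f(\square)$, so $n_g = n_f + (n_\psi\circ f)\, f'$, and one controls the correction $(n_\psi\circ f)\, f'$ by Koebe distortion in terms of the hyperbolic distance to $\partial(\text{box})$. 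That step is the whole content of the box lemma; without it the claim that gluing Beltrami coefficients glues the associated Bloch martingales is simply not justified, and it is exactly why the paper's remark says the argument ``forces us to restrict our attention to classes of conformal maps with bounded distortion'' --- the Koebe/compactness control is uniform only on $\bH_k$ for fixed $k<1$.

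Two smaller points. First, the rescaling $\varphi$ taking $\square_{[0,1]}^n$ to $\square_I^n$ is affine, so $n_\varphi\equiv 0$ and $|2n_f/\rho_\mathbb{H}|$ is \emph{exactly} conformally natural under it: there is no $O(1/\sqrt n)$ error from this normalization, contrary to your first paragraph. The seam error you do need comes from the localization above, not from $\varphi$. Second, the ``perimeter-to-area yields $O(1/\sqrt n)$'' heuristic conflates two unrelated error sources. A hyperbolic $c$-neighborhood of the lateral seams of $\square_{[0,1]}^n$ has $dx\,dy/y$-measure $O(1)$ while the box has measure $n\log 2$, so the geometric seam contribution is $O(1/n)$ — consistent with the $C/n$ in Lemma \ref{boxcart}. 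The $O(1/\sqrt n)$ is the separate martingale-approximation error from Theorem \ref{variance-thm} and should not be attributed to the gluing.

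If you add a precise statement and proof of the localization estimate (the ``$g=\psi\circ f$ on a box, control $n_\psi$ via Koebe'' step, with explicit dependence on hyperbolic distance to the seam), and correct the bookkeeping on where the two error rates come from, the rest of your reduction to Theorems \ref{local-variance} and \ref{variance-thm} and the contradiction argument against $\Sigma^2(k)=\sup_{f\in\bS_k}\sigma^2(b_f)$ is sound.
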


\begin{remark}
The proof given in \cite{qcdim} forces us to restrict our attention to classes of conformal maps with bounded distortion. It would be interesting to know if 
a variant of the box lemma holds
for all conformal maps with $\lim_{k\to 1^-} \Sigma^2(k)$ in place of $\Sigma^2(k)$.
\end{remark}

In view of Theorems \ref{local-variance} and \ref{variance-thm}, 
(i) gives the upper bound in Theorem \ref{main-thm}, while (ii) gives the lower bound.
The notion of periodic Beltrami coefficients will be discussed below in Section \ref{sec:pbc}.
In order to state the infinitesimal version of the box lemma, note that the formula for the Beurling transform
(\ref{eq:beurling-def}) may not converge if $\mu$ is not compactly supported. Therefore, we are obliged to work with a modified Beurling transform
\begin{equation}
\label{eq:beurling22}
 \mathcal S^\# \mu(z) \, =\, -\frac{1}{\pi} \int_{\mathbb{H}} \mu(\zeta) \biggl [ \frac{1}{(\zeta-z)^2} - \frac{1}{\zeta^2} \biggr ] \, |d\zeta|^2.
 \end{equation} 
However, the formula for the derivative remains the same:
\begin{equation}
\label{eq:beurling23}
 \text{``}(\mathcal  S \mu)'(z)\text{''} \, := \,  (\mathcal S^\# \mu)'(z)  \, =\, -\frac{2}{\pi} \int_{\mathbb{H}} \frac{\mu(\zeta)}{(\zeta-z)^3} \, |d\zeta|^2.
 \end{equation}

In \cite{qcdim}, the infinitesimal analogue of the box lemma was proved with a quantitative relation between the box size and the error term:
 
\begin{lemma}
\label{boxcart}
{\em (i)} 
For any Beltrami coefficient $\mu$ with $|\mu| \le \chi_\mathbb{\overline{H}}$ and $n$-box $\square_I^n \subset \mathbb{H}$,
 \begin{equation}
 \label{eq:boxcart}
 \fint_{\square_I^n} \, \biggl |\frac{2(\mathcal S\mu)'}{\rho_{\mathbb{H}}}(z) \biggr |^2 \, \frac{|dz|^2}{y} \, < \, \Sigma^2 + C/n.
 \end{equation}
 
 {\em (ii)} Conversely, for $n \ge 1$, there exists a Beltrami coefficient $\mu$, periodic with respect to the $2^n$-adic grid, which satisfies
 \begin{equation}
 \label{eq:boxcart-local-converse}
 \fint_{\square_I^n} \, \biggl |\frac{2(\mathcal S\mu)'}{\rho_{\mathbb{H}}}(z) \biggr |^2 \, \frac{|dz|^2}{y}  \, > \, \Sigma^2 - C/n
 \end{equation}
 on every $n$-box $\square_I^n$.
\end{lemma}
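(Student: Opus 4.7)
\medskip
\noindent\textbf{Proof plan.} In both parts we exploit the Möbius covariance of the Beurling transform: the affine maps $z \mapsto az+b$, $a>0$, preserve $\mathbb{H}$, permute the dyadic boxes $\square_I^n$, and conjugate the operator $\mathcal{S}^\#$ to itself (up to an additive constant that is killed by taking the derivative). Consequently, it suffices to prove both (\ref{eq:boxcart}) and (\ref{eq:boxcart-local-converse}) for the single standard box $\square_{[0,1]}^n$.

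\medskip
\noindent\textbf{Part (i), upper bound.} Starting from $\mu$ with $|\mu|\le\chi_{\mathbb{H}}$, I will build a ``tiled'' Beltrami coefficient $\tilde\mu$ supported in the horizontal strip $\{0<\im z<1\}$ by setting $\tilde\mu(x+iy)=\mu(\{x\}+iy)$, where $\{x\}$ denotes the fractional part. By construction $|\tilde\mu|\le\chi_{\mathbb{H}}$, so $\sigma^2_{[0,1]}(\mathcal{S}^\#\tilde\mu) \le \Sigma^2$ (after applying the Möbius equivalence $\mathbb{H}\leftrightarrow\mathbb{D}^*$ and the definition (\ref{eq:sigma2-def})). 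Using the explicit kernel in (\ref{eq:beurling23}) and the $|\zeta-z|^{-3}$ decay, the contribution to $(\mathcal{S}^\#\tilde\mu)'(z)$, for $z\in\square_{[0,1]}^n$, coming from strip-pieces outside a unit neighborhood of $\re z$ matches the corresponding contribution for $\mathcal{S}^\#\mu$ up to an $L^2(\square_{[0,1]}^n, |dz|^2/y)$ error of order $1/n$ — the crucial gain here over the $1/\sqrt n$ rate of Theorem \ref{variance-thm} is that we are estimating a quadratic form directly rather than a martingale square function. Averaging $|(\mathcal{S}^\#\tilde\mu)'/\rho_{\mathbb{H}}|^2$ over the horizontal tiling of $\square_{[0,1]}^n$ on length scale $N\gg n$ and letting $N\to\infty$ in the definition (\ref{eq:avar-H}) then gives (\ref{eq:boxcart}) with error $C/n$.

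\medskip
\noindent\textbf{Part (ii), lower bound.} The plan is to invoke the fractal approximation (\ref{eq:AIPP-fat}): pick $\mu_0$, eventually invariant under $z\mapsto z^{2^n}$ near $\mathbb{S}^1$, with $\sigma^2(\mathcal{S}\mu_0) > \Sigma^2 - \varepsilon/2$. Transporting $\mu_0$ to $\mathbb{H}$ through the composition of the Cayley map $\mathbb{D}^*\to \mathbb{H}$ with a logarithm lift, the invariance $(z^{2^n})^*\mu_0=\mu_0$ becomes periodicity of the pulled-back Beltrami coefficient $\mu$ under the affine map $z\mapsto 2^n z$, i.e.\ periodicity with respect to the $2^n$-adic dyadic grid. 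For such a $\mu$, the box average in (\ref{eq:boxcart-local-converse}) is the same on every $n$-box $\square_I^n$ by construction. By Theorem \ref{variance-thm}, this common value coincides with the local $n$-variance of the Bloch martingale $B$ of $\mathcal{S}^\#\mu$ up to $O(1/\sqrt n)$; by Theorem \ref{local-variance} and periodicity, the latter agrees with $\sigma^2(B)/\log 2 \ge \Sigma^2-\varepsilon/2$ up to an error that one can arrange to be $C/n$ by choosing $n$ large and exploiting — as in part (i) — the quadratic-form decay estimate rather than the martingale fluctuation bound.

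\medskip
\noindent\textbf{Main obstacle.} The technical heart is to upgrade the $O(1/\sqrt n)$ error of Theorem \ref{variance-thm} to $O(1/n)$ in both directions. In the martingale picture this looks impossible — the square function has sub-Gaussian fluctuations — but for quadratic functionals of $\mu$ the improvement is available because we can compare two Beltrami coefficients that differ outside a neighborhood of $\square_I^n$ and integrate the $|\zeta-z|^{-3}$ kernel against this difference in $L^2$, which by Cauchy--Schwarz and harmonic analysis on the strip gives a $1/n$ bound. Executing this comparison cleanly — keeping the tiled $\tilde\mu$ within the admissible class $|\tilde\mu|\le\chi_\mathbb{H}$ while matching $\mathcal S\mu$ on the box up to the correct order — is the delicate step, which is carried out in \cite{qcdim}.
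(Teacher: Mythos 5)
The paper itself gives no proof of Lemma \ref{boxcart}; it is stated and credited to \cite{qcdim}, so there is nothing internal to compare against and your sketch must stand on its own. With that caveat, part (i) has a genuine gap. Periodizing $\mu$ only in $x$, i.e.\ setting $\tilde\mu(x+iy) = \mu(\{x\}+iy)$ on a unit strip, does not let you pass from the asymptotic bound $\sigma^2_{[0,1]}(\mathcal S^\#\tilde\mu) \le \Sigma^2$ to the finite-scale bound $\fint_{\square_{[0,1]}^n} |2(\mathcal S^\#\tilde\mu)'/\rho_{\mathbb{H}}|^2 \, |dz|^2/y \le \Sigma^2 + C/n$. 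The asymptotic variance in \refeq{eq:avar-H} is a $\limsup$ as $h \to 0$; at the fixed depth $h = 2^{-n}$ the pre-limsup box average has no reason to lie within $C/n$ of $\Sigma^2$ (it is only bounded by the Bloch constant), and an $x$-periodic coefficient that is ``large'' in the band $2^{-n} < y < 1$ and ``small'' deeper down provides a counterexample to your inference. Your closing step, averaging over the ``horizontal tiling on length scale $N \gg n$ and letting $N\to\infty$,'' is empty once $\tilde\mu$ is $1$-periodic in $x$: those translates are identical copies of $\square^n$, whereas the missing information is structure in the $y$-direction linking scale $2^{-n}$ to scale $0$.

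The fix is exactly the device you invoke in part (ii) but not in part (i): replace the $x$-periodization with periodization with respect to the $2^n$-adic grid as in Section \ref{sec:pbc}, i.e.\ restrict $\mu$ to the reflected box $\overline{\square}_{[0,1]}^n$ and propagate by the affine transition maps $L_{I,J}$. The resulting self-similar $\tilde\mu$ still satisfies $|\tilde\mu| \le \chi_{\overline{\mathbb{H}}}$ (the $L_{I,J}$ are similarities), and now scale-invariance makes the $n$-box average of $|2(\mathcal S^\#\tilde\mu)'/\rho_{\mathbb{H}}|^2$ the same over \emph{every} dyadic $n$-box, so it is literally equal to $\sigma^2(\mathcal S^\#\tilde\mu) \le \Sigma^2$ rather than merely related to it asymptotically. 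Your comparison estimate is then the right tool: $\mu - \tilde\mu$ is supported off $\overline{\square}_{[0,1]}^n$, the kernel $|\zeta - z|^{-3}$ decays, the contribution to the box average is confined to a hyperbolically bounded collar near $\partial\square^n$, and dividing by the box volume $n\log 2$ yields $O(1/n)$. This is the content of your ``Main obstacle'' paragraph and is the genuine improvement over the $O(1/\sqrt n)$ martingale error in Theorem \ref{variance-thm}; what fails in your write-up is the choice of comparison object, not the comparison itself. (Two small notational slips: the class is $|\mu| \le \chi_{\overline{\mathbb{H}}}$, so $\mu$ lives in the lower half-plane and your strip should be $\{-1 < \im w < 0\}$, and you wrote $\chi_{\mathbb{H}}$ throughout where $\chi_{\overline{\mathbb{H}}}$ is meant.)
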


The quantitative estimate will be exploited in Section \ref{sec:ims}.

\subsection{Periodic Beltrami coefficients}
\label{sec:pbc}
Given two intervals $I, J \subset \mathbb{R}$, let $L_{I, J}(z) = Az + B$ be the unique linear map with $A > 0, B \in \mathbb{R}$ that maps $I$ to $J$.
 For a box  $\square$, we denote its reflection in the real line by $\overline{\square}$.
Suppose $\mu$ is a Beltrami coefficient supported on the lower half-plane.
We say that $\mu$ is {\em periodic} (with respect to the dyadic grid) if for any two dyadic intervals
$I, J \subset \mathbb{R}$ with $|I|, |J| \le 1$,
$
\mu|_{\overline{\square}_I} = L_{I, J}^*(\mu|_{\overline{\square}_J}).
$
We typically assume that
$\mu$ is supported on the strip $$\{w : -1 < \im w < 0\},$$ in order for $\mu$ to be invariant under translation by 1. In this case, $\mu$ descends
to a Beltrami coefficient on the disk via the exponential mapping, which is eventually-invariant under $z \to z^2$.
The notion of a Beltrami coefficient periodic with respect to the $p$-adic grid is defined similarly.

Before continuing further, we define a {\em dyadic box} in the unit disk to be the image of $\square_I$, $|I| \le 1$, under the exponential mapping $\xi(w) = \exp(2\pi i w).$
Reflecting in the unit circle, we obtain a dyadic box in the exterior unit disk. Note that these boxes are not geometric rectangles, nor do they tile
$\mathbb{D}$ or $\mathbb{D}^*$ completely.

\subsection{Dynamical Beltrami coefficients}
We now consider two classes of dynamical Beltrami coefficients on the unit disk that naturally arise in complex dynamics and Teich\"muller theory:
\begin{itemize}
\item $M_{\bl} = \bigcup_f M_f(\mathbb{D})$ consists of Beltrami coefficients that are {\em eventually-invariant} under some finite
\emph{Blaschke product} $f(z) = z \prod_{i=1}^{d-1} \frac{z-a_i}{1-\overline{a_i}z}$, i.e.~Beltrami coefficients which satisfy $f^*\mu = \mu$ in some open neighbourhood of the unit circle.
\item $M_{\fuchs} = \bigcup_\Gamma M_{\Gamma}(\mathbb{D})$ consists of Beltrami coefficients  that are invariant under some co-compact Fuchsian group $\Gamma$,
i.e.~$\gamma^*\mu = \mu$ for all $\gamma \in \Gamma$.
\end{itemize}

Suppose $\mu$ belongs to one of the two classes of Beltrami coefficients above, with $\|\mu\|_\infty < 1$. 
We view $f = w^{\mu}$ as a conformal map of the exterior unit disk. 
From the construction, the image of the unit circle $f(\mathbb{S}^1)$ is a Julia set or a limit set of a quasi-Fuchsian group. 
Using the ergodicity of the geodesic flow on the unit tangent bundle $T_1 X$ (Fuchsian case) or Riemann surface lamination $\hat X_B$ (Blaschke case), it is not hard to show that for any $\varepsilon > 0$, there exists $n_0$ sufficiently large,
\begin{equation}
\label{eq:ergodicity}
\sigma^2(\log f')  - \varepsilon \, < \,  \fint_{\square_I^n} \, \biggl |\frac{2n_f}{\rho_*}(z) \biggr |^2 \,  \rho_*|dz|^2 \, < \,
\sigma^2(\log f') + \varepsilon
\end{equation}
for any $n$-box $\square^n_I \subset \mathbb{D}^*$ with $n \ge n_0$.
Applying Theorems \ref{local-variance} and \ref{variance-thm} 
shows that Theorem \ref{dynamical-connections} holds for conformal maps $f = w^{\mu}$ with $\mu \in M_{\bl}$ or $M_{\fuchs}$.
More generally, one can prove (\ref{eq:ergodicity}) for conformal maps to simply-connected domains bounded by Jordan repellers, see \cite[Section 8]{AIPP} for a definition.
The reader interested in working out the details can consult \cite{mcmullen}.

\section{Applications to integral means}
\label{sec:ims}

In this section, we use martingale techniques to study the rescaled integral means spectrum (\ref{eq:inf-ims}).
For a fixed $\tau \in \mathbb{C}$, 
the (uniform) convergence $$\beta_{\log (w^{k\mu})'}(\tau/k) \, \to \, \beta_{\mathcal S\mu}(\tau), \qquad k \to 0,$$ can be justified
using (\ref{eq:quadratic-error}) and a variant of Lemma \ref{same-variance} for integral means, so the rescaled integral means spectrum is well-defined.
Moving to the upper half-plane, we are led to analyze the asymptotic expansion of 
$$
B_0(\tau) \, = \,\lim_{k \to 0}  B_k(\tau/k)  \, =\, \sup_{|\mu| \le \chi_{\overline{\mathbb{H}}}} \beta_{\mathcal S^\#\mu} (\tau),
$$
near $\tau = 0$.
Let $B$ be the dyadic martingale associated to the Bloch function $\mathcal S^\#\mu$, $|\mu| \le \chi_{\overline{\mathbb{H}}}$ and $S = \re B$ be its real part. It suffices to estimate $\beta_S(t)$
with $t \in \mathbb{R}$.
We view $S$ as a $p$-adic martingale with $p = 2^n$, where the parameter $n$ will be chosen momentarily.
 Suppose $I$ is a $2^n$-adic interval and
  $I_1, I_2, \dots, I_{2^n}$ are its $2^n$-adic children. Then,
\begin{align*}
\frac{1}{2^n} \sum_{j=1}^{2^n} \exp(t \Delta_j) & = 
1 + \frac{t^2}{2} \biggl ( \frac{1}{2^n} \sum_{j=1}^{2^n} \Delta_j^2 \biggr) + \sum_{k \ge 3}  \frac{t^k}{k!} 
\biggl ( \frac{1}{2^n} \sum_{j=1}^{2^n} \Delta_j^k \biggr ). \\
& =
 1 + \frac{n t^2}{2} \var_I^n + \mathcal O \biggl ( \sum_{k \ge 3}  \frac{t^k}{k!} \cdot (Cn)^{k/2} \biggr ).
\end{align*}
Above, we used (\ref{eq:moments}) to estimate the remainder term.
If $b = \mathcal S^\#\mu$, $|\mu| \le \chi_{\overline{\mathbb H}}$, from Lemma \ref{boxcart}(i), we see that the above expression is bounded by
$$
\le 1 + \frac{n t^2}{2} \biggl (\frac{\Sigma^2 \log 2}{2} + \mathcal O(1/\sqrt{n}) + \mathcal O(tn^{1/2}) + \dots \biggr ).
$$
Note that in order to use martingale techniques, we had to downgrade the box estimate with $\Sigma^2/2 + C/n$ to the variance bound $\var_I^n/\log 2 \le \Sigma^2/2 + C/\sqrt{n}$, cf.~Theorem \ref{variance-thm}. Hence,
$$
\frac{1}{n \log 2} \log \biggl [ \frac{1}{2^n} \sum_{j=1}^{2^n} \exp(t \Delta_j)\biggr ] \le \frac{t^2}{2} \biggl (\frac{\Sigma^2}{2} + \mathcal O(1/\sqrt{n}) + \mathcal O(tn^{1/2}) + \dots \biggr ).
$$
Taking $n = \lfloor t^{-1} \rfloor$ leads to the estimate
\begin{equation}
\label{eq:b0expansion}
B_0(t) \le 1+\Sigma^2t^2/4+\mathcal O(|t|^{5/2}).
 \end{equation}
 By using Lemma \ref{boxcart}(ii), the above reasoning gives a lower bound for integral means  which shows that (\ref{eq:b0expansion}) is an equality.
As mentioned in the introduction, if one avoids martingales, one can obtain a better remainder term than $\mathcal O(|t|^{5/2})$.

\section{A central limit theorem}
\label{sec:clt}

Suppose $b = P\mu$ with $|\mu| \le \chi_{\mathbb{D}}$. Consider the function 
\begin{equation}
\label{eq:rescaled-bloch2}
 \tilde b_r(\theta) := \frac{b(re^{i \theta})}{\sqrt {|\log(1-r)|}}.
\end{equation}
The sub-Gaussian estimate (\ref{eq:subgaussian})  shows that most of the integral
$\int (\re \tilde{b}_r(\theta))^2 d\theta$ comes from the set
$$
A_\delta = \bigl\{\theta: -1/\delta < \re \tilde{b}_r(\theta) < 1/\delta\bigr\},
$$
that is, by making $\delta$ small, we can guarantee that
$$
\int_{A_\delta^c} (\re \tilde{b}_r(\theta))^2 d\theta < \varepsilon,
$$
where the estimate is uniform over all functions $b$ of the form above.
We now show that if $\sigma^2(b, r)$ is close to $\Sigma^2$, then the distribution of $\re \tilde{b}_r(\theta)$ is close to a Gaussian of mean 0
and variance $\Sigma^2/2$.

\begin{theorem}
\label{real-gaussian}
Suppose $|\mu| \le \chi_{\mathbb{D}}$.
Given $\varepsilon > 0$, there exists a $\delta > 0$ such that if $r$ is sufficiently close to 1 and
 $$\sigma^2(P\mu,r) \, = \, \frac{1}{2\pi |\log(1-r)|} \int_{|z|=r} |P\mu(z)|^2 \, |dz| \, > \, \Sigma^2 - \delta,$$ 
then for any $t \in \mathbb{R}$,
$$
\bigl | \mathbb{P}(\re \tilde{b}_r(\theta) < t) - \mathcal N_{0, \Sigma^2/2}(t) \bigr | < \varepsilon.
$$
The same statement holds with $\im \tilde{b}_r(\theta)$ as well.
\end{theorem}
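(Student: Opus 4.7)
The plan is to pass to the dyadic-martingale model of Section 3, establish a variance-concentration statement, and then invoke a martingale central limit theorem. After conjugating to the upper half-plane, let $B$ be Makarov's dyadic martingale associated to (the half-plane analogue of) $b = P\mu$ and set $S_n = \re B_n$. Via the pointwise approximation $|b(z_I) - B_I| = \mathcal O(1)$ and the fact that the circle $|z| = r$ sits naturally above the level of dyadic intervals of length $\approx 1-r$, the distribution of $\re \tilde b_r(\theta)$ is within an $\mathcal O(1/\sqrt n)$ Kolmogorov-distance error of the distribution of $S_n/\sqrt{n \log 2}$, where $n \approx |\log(1-r)|/\log 2$. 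It therefore suffices to show that the latter distribution is $\varepsilon$-close to $\mathcal N_{0, \Sigma^2/2}$.

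The key input is that near-extremality forces the local variance $\var_I S$ to be nearly constant, at its maximal value, on most of the dyadic tree. Indeed, Lemma \ref{boxcart}(i) combined with (\ref{eq:ltg1})--(\ref{eq:ltg2}) supplies the pointwise upper bound $\var_I^m S/\log 2 \le \Sigma^2/2 + \mathcal O(1/\sqrt m)$ on every $m$-box. On the other hand, the hypothesis $\sigma^2(P\mu,r) > \Sigma^2 - \delta$, rewritten via (\ref{eq:avar-H}) and Theorem \ref{lil-circular}, yields $(1/n)\,\mathbb E \langle S \rangle_n \ge (\Sigma^2 \log 2)/2 - \eta(\delta)$ with $\eta(\delta) \to 0$ as $\delta \to 0$. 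A Markov argument applied to the nonnegative defect $(\Sigma^2\log 2)/2 - \var_{I_j(x)} S$ then shows that, outside a set of $x$-measure at most $\eta'(\delta)$, the normalized square function $\langle S \rangle_n / n$ lies within $\eta'(\delta)$ of $(\Sigma^2\log 2)/2$, with $\eta'(\delta) \to 0$.

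With the square function nearly deterministic and with jumps uniformly bounded (so the Lindeberg condition follows trivially from the sub-Gaussian estimate (\ref{eq:subgaussian})), a standard martingale CLT of McLeish/Brown type delivers the distributional convergence $S_n/\sqrt{n \log 2} \Rightarrow \mathcal N_{0, \Sigma^2/2}$; continuity of the Gaussian CDF upgrades this to uniformity in $t \in \mathbb R$. Alternatively, one can re-run the integral-means computation of Section \ref{sec:ims} with $\var_I^n$ replaced by its near-constant value to obtain moment-generating-function convergence $\mathbb E \exp(tS_n/\sqrt{n\log 2}) \to \exp(t^2 \Sigma^2/4)$ for every real $t$, which by Curtiss's theorem yields the same conclusion. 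The argument for $\im \tilde b_r(\theta)$ is identical after replacing $B$ by $-iB$, using (\ref{eq:ltg3}) to see that the same local variance governs both components.

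The main obstacle is the variance-concentration step. What makes it work is that the Box Lemma supplies a pointwise, scale-uniform upper ceiling on local variance, and this ceiling can be squeezed against the averaged lower bound coming from $L^2$-extremality; without the pointwise ceiling one would obtain only $L^1$-convergence of $\langle S \rangle_n /n$, which is insufficient to run the martingale CLT as cleanly. Quantitative tracking of $\delta(\varepsilon)$ would require following the error constants in Theorem \ref{variance-thm} and Lemma \ref{boxcart}, but for the qualitative statement as posed, the squeeze argument suffices.
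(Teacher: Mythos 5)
Your proposal is essentially correct and shares the core architecture of the paper's proof: both squeeze the pointwise upper ceiling on local variance coming from Lemma~\ref{boxcart}(i) and Theorem~\ref{variance-thm} against the averaged lower bound supplied by the near-extremality hypothesis, concluding that local variance is nearly constant, and then deduce a CLT. The difference lies in the last step. You show, via a Markov argument, that the normalized square function concentrates near $(\Sigma^2\log 2)/2$ on a large-measure set and then invoke a named black-box martingale CLT (McLeish/Brown, or Curtiss via the moment generating function). The paper instead proceeds constructively: it marks $p$-adic boxes as good or bad according to whether the local variance is pinched near $\Sigma^2/2$, bounds the relative area of bad boxes from the extremality hypothesis, decomposes $S = S^{\good} + S^{\bad}$, and shows directly that the characteristic function of $S^{\good}_n$ is close to Gaussian while $S^{\bad}_n$ is negligible. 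Both routes are valid; the paper's avoids appealing to external CLT machinery, while yours is shorter modulo carefully checking the hypotheses of the cited theorems in this nonstationary setting.

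Two small points of imprecision worth tightening. First, you cite Theorem~\ref{lil-circular} to convert the hypothesis on $\sigma^2(P\mu,r)$ into a lower bound on $\frac{1}{n}\mathbb{E}\langle S\rangle_n$, but that theorem is an asymptotic ($\limsup$) statement; what you actually need is the finite-scale version, which is precisely equation~\eqref{eq:ltg2} with $I=[0,1]$. Second, your Markov argument works only after passing to $p$-adic martingales with $p=2^m$ for $m$ large: the defect $(\Sigma^2\log 2)/2 - \var_I S$ is nonnegative (up to the $\mathcal O(1/\sqrt m)$ error) only when $\var_I$ denotes the $p$-adic local variance $\var^m_I$ from Theorem~\ref{variance-thm}, not the single-scale dyadic one. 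You do gesture at this by working with $\var_I^m$ in the ceiling, but the Markov step should then explicitly be run at the $2^m$-adic level, which is also what the paper does when it fixes $p = 2^n$ large at the outset.
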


\subsection{Characteristic functions}

Converting to the upper half-plane, let $B$ be the $p$-adic martingale associated to the Bloch function $b = \mathcal S^\#\mu$, $|\mu| \le \chi_{\overline{\mathbb{H}}}$ and $S = \re B$ be its real part. 
Here, we choose $p$ sufficiently large to guarantee that the box averages  (\ref{eq:boxcart}) are at most
$\Sigma^2+\delta_1$. Let $\chi = \log p$ be the ``Lyapunov exponent'' of the $p$-adic grid.
As is standard \cite{durrett}, to prove the central limit theorem, one must examine the characteristic functions 
$$
\varphi_n(t) = \mathbb{E} \exp \biggl ( i \cdot \frac{t S_n}{\sqrt{n\chi}}\biggr).
$$
However, since $\Delta_j(x)$ may not be constant in $x$, martingale jumps are usually not independent. Instead, we leverage the fact that
 the local variance is approximately constant.
Observe that if $I$ is a $p$-adic interval and $I_1, I_2, \dots, I_{p}$ are its children, then for $t$ small,
$$
\frac{1}{p} \sum_{j=1}^{p} e^{it \Delta_j} = 1 - \frac{t^2}{2} \var_I + \mathcal O(t^3).
$$
If $S$ had {\em constant local variance}, that is if $\var_I = \sigma^2$ for all $I$, then the characteristic function
of $S_n/\sqrt{n\chi}$ would be simply
$$
\varphi_n = \biggl(1 - \frac{\sigma^2}{\chi} \cdot \frac{ t^2}{2n} + \mathcal O(t^3) \biggr)^n.
$$
Taking $n \to \infty$, one obtains
$$
\varphi \,=\, \lim_{n \to \infty} \varphi_n \, =\, \exp \biggl (-\frac{\sigma^2}{\chi} \cdot \frac{t^2}{2} \biggr ),
$$
which is the characteristic function of the Gaussian $\mathcal N_{0,\sigma^2/\chi}$. 

For the problem at hand, we must slightly relax the assumption of constant local variance. 
First, note that if the local variance is pinched
\begin{equation}
\label{eq:goodbox}
\Sigma^2/2 - \delta_2 \, \le \, \var_I/\chi \, \le \, \Sigma^2/2 + \delta_2,
\end{equation} 
then the characteristic functions $\varphi_n$ satisfy
\begin{equation}
\label{eq:lightbulb}
\ \varphi_n(t) = \exp \Bigl (-\sigma_n \cdot  t^2/2+ o(t^2) \Bigr ), \quad \text{with } |\sigma_n - \Sigma^2/2| \le \delta_2.
\end{equation}
In this case, the inversion formula for characteristic functions guarantees that for any $n \ge 1$, the distribution of
$S_n/\sqrt{n\chi}$ is close to  $\mathcal N_{0,\Sigma^2/2}$.

\subsection{Allowing bad boxes}
Additionally, we must allow a small proportion of $p$-adic intervals $I$ to be bad where we only have weak control on $\var_I$ coming from the bounded increments assumption -- note that the Bloch norm $\| b\|_{\mathcal B(\mathbb{H})}$ is bounded by a universal constant for $b = \mathcal S^\#\mu$, $|\mu| \le \chi_{\overline{\mathbb{H}}}$.
For a $p$-adic interval, write
$$
\square_I^{(p)} \, = \, \Bigl \{ w : \re w \in I, \, \im w \in \bigl [p^{-1}|I|, \,|I| \bigr ] \Bigr \}.
$$
Set
$$
\square^n \, = \, \bigl \{ w : \re w \in [0,1], \, \im w \in [p^{-n}, \, 1] \bigr  \}.
$$
Fix $n \ge 1$. Call a $p$-adic box $\square_I^{(p)}$ with  $|I| \ge p^{-n+1}$ {\em good} if
(\ref{eq:goodbox}) holds and 
 {\em bad} otherwise.
Let $\mathscr E \subset \square^n$ denote the union of bad boxes.
Inspecting (\ref{eq:avar-H}), we see that if 
\begin{equation}
\sigma^2(S, n)/\chi \, := \, \frac{1}{n \chi} \int_0^1 S_n^2 \, dx \, \ge \, \Sigma^2/2-\delta,
\end{equation}
 then
 \begin{equation}
\label{eq:badbox}
\fint_{\square^n} \chi_{\mathscr E} \cdot \frac{|dz|^2}{y} \, \le \, \delta_3.
\end{equation}
Therefore, to prove Theorem \ref{real-gaussian}, it suffices to show:

\begin{lemma}
\label{badbox-lemma}
If the parameters $\{ \delta_i\}$ above are sufficiently small, then $$\bigl | \mathbb P(S_n/\sqrt{n\chi} < t) - \mathcal N_{0,\Sigma^2/2}(t) \bigr | \le \varepsilon.$$
\end{lemma}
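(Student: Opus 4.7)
The plan is to establish Lemma \ref{badbox-lemma} by estimating the characteristic function $\varphi_n(t) := \int_0^1 \exp\!\bigl(it S_n(x)/\sqrt{n\chi}\bigr)\,dx$ and converting back to the distribution function via the standard Esseen smoothing inequality. The sub-Gaussian bound \refeq{eq:subgaussian} gives $\mathbb{P}\bigl(|S_n|/\sqrt{n\chi} > r\bigr) \le e^{-c'r^2}$ uniformly over $|\mu| \le \chi_{\overline{\mathbb{H}}}$, matching the Gaussian tail of $\mathcal{N}_{0,\Sigma^2/2}$; together with the strict positivity of the limit density, this reduces the task to showing $|\varphi_n(t) - e^{-\Sigma^2 t^2/4}| < \varepsilon'$ uniformly on a compact interval $|t| \le T$, with $T$ and $\varepsilon'$ depending only on $\varepsilon$.

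The engine is an approximate exponential martingale. Set
\[
Q_k(x) := \frac{1}{n\chi}\sum_{j=1}^k \var_{I_{j-1}(x)}, \qquad \eta_k := \exp\!\Bigl(\tfrac{it S_k}{\sqrt{n\chi}} + \tfrac{t^2}{2} Q_k\Bigr).
\]
A third-order Taylor expansion of the inner exponential, combined with the bounded-jump property of $B$, yields
\[
\mathbb{E}[\eta_k\,|\,\mathcal{M}_{k-1}] \;=\; \eta_{k-1}\bigl(1 + \mathcal{O}(t^3/n^{3/2})\bigr),
\]
with the implicit constant depending only on the universal Bloch norm of $\mathcal{S}^\#\mu$. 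Since $0 \le Q_k \le C/\chi$ uniformly, $|\eta_k| \le e^{Ct^2}$, and telescoping produces $|\mathbb{E}\eta_n - 1| \le C\,t^3 e^{Ct^2}/\sqrt{n}$.

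The hypothesis \refeq{eq:badbox} is brought in through Markov's inequality: letting $N_{\bad}(x)$ count bad $p$-adic intervals along the path of $x$, the estimate $\int_0^1 N_{\bad}(x)/n\,dx \le \delta_3$ implies that the good set $G := \{x : N_{\bad}(x) \le \sqrt{\delta_3}\,n\}$ has $|G^c| \le \sqrt{\delta_3}$. On $G$, the pinching \refeq{eq:goodbox} applies at $\ge (1-\sqrt{\delta_3})n$ steps while the remaining $\le \sqrt{\delta_3}\,n$ bad steps contribute at most $C\sqrt{\delta_3}/\chi$ to $Q_n$; hence $|Q_n(x) - \Sigma^2/2| \le \delta_2 + C\sqrt{\delta_3}$ for $x \in G$.

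Finally, write $\varphi_n(t) = \mathbb{E}\bigl[\eta_n\cdot e^{-(t^2/2)Q_n}\bigr]$ and split according to $G, G^c$. On $G$ one has $e^{-(t^2/2)Q_n} = e^{-\Sigma^2 t^2/4}\bigl(1 + \mathcal{O}((\delta_2+\sqrt{\delta_3})t^2)\bigr)$; on $G^c$ one uses only the crude bound $|\eta_n\,e^{-(t^2/2)Q_n}| \le e^{Ct^2}$. Combining with the exponential-martingale estimate above,
\[
\bigl|\varphi_n(t) - e^{-\Sigma^2 t^2/4}\bigr| \;\le\; C\,e^{Ct^2}\bigl((\delta_2 + \sqrt{\delta_3})\,t^2 \;+\; \sqrt{\delta_3} \;+\; t^3/\sqrt{n}\bigr),
\]
which for $|t| \le T$ drops below any prescribed threshold once $\delta_2,\delta_3$ are small and $n$ is large. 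The main obstacle I anticipate is the exponential-martingale step: propagating the per-step error $\mathcal{O}(t^3/n^{3/2})$ through $n$ telescoping iterations without blow-up hinges on the a priori bound $|\eta_k| \le e^{Ct^2}$, and keeping the constant $C$ universal requires the uniform Bloch-norm control on $\mathcal{S}^\#\mu$, $|\mu| \le \chi_{\overline{\mathbb{H}}}$, together with the bounded-increments property of $B$. A secondary technical point is the $\sqrt{\delta_3}$ loss inherent in the Markov conversion from total bad-box hyperbolic mass to a bad-box count along the typical path.
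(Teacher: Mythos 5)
Your argument is correct, but it takes a genuinely different route from the paper's. The paper decomposes the \emph{martingale}: it writes $S = S^{\good} + S^{\bad}$, where $S^{\good}$ is obtained by redefining the jumps of $S$ on the bad intervals so that its local variance is pinched near $\Sigma^2\chi/2$ everywhere, and $S^{\bad} := S - S^{\good}$ has non-zero increments only on bad intervals. The pinched-local-variance CLT already established via \refeq{eq:goodbox}--\refeq{eq:lightbulb} then applies directly to $S^{\good}$, and the bound \refeq{eq:badbox} translates (via orthogonality of jumps, bounded increments, and the identity $\int_0^1 N_{\bad}(x)/n\,dx = \fint_{\square^n}\chi_{\mathscr E}\,|dz|^2/y$) into $\sigma^2(S^{\bad},n) \lesssim \delta_3$, so $S^{\bad}_n/\sqrt{n\chi}$ is small in probability and the distribution of $S_n/\sqrt{n\chi}$ is barely perturbed. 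You instead decompose the \emph{sample space}: you keep $S$ intact, run the characteristic function computation directly via the approximate exponential martingale $\eta_k$, and handle the bad boxes by Markov's inequality on the path count $N_{\bad}(x)$, restricting to the set $G$ of paths that hit $\le \sqrt{\delta_3}\,n$ bad intervals. The paper's route is more modular (it reuses the constant-local-variance CLT as a black box) and shorter; yours is more self-contained and comes with explicit quantitative error in terms of $\delta_2$, $\delta_3$, $n$, $t$, which is a real advantage if one wants rates. One small point worth spelling out in your version: your bound $|\varphi_n(t) - e^{-\Sigma^2 t^2/4}| \lesssim e^{Ct^2}(\sqrt{\delta_3} + (\delta_2+\sqrt{\delta_3})t^2 + t^3/\sqrt n)$ does not vanish at $t=0$, so if you feed it directly into Esseen's integral $\int_0^T |\varphi_n(t)-\hat g(t)|\,dt/t$ you get a spurious logarithmic divergence near $t=0$; you need to supplement it with the elementary Taylor bound $|\varphi_n(t)-\hat g(t)| \lesssim \delta\,t^2 + t^3$ near the origin (which follows from $\mathbb E S_n = 0$ and $|\mathbb E S_n^2/(n\chi) - \Sigma^2/2| \le \delta$) before the smoothing inequality closes cleanly.
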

 
\begin{proof}
Write $S = S^{\good} + S^{\bad}$ as a sum of two martingales, where the local variance
of $S^{\good}$ is close to $\Sigma^2$ on all intervals, while the increments of $S^{\bad}$ are non-zero only on bad intervals. 
We may form $S^{\good}$ from $S$ by adjusting the jumps on the bad intervals, and defining $S^{\bad} := S - S^{\good}$ to be the difference.
From the construction, it is clear that 
$$
\sigma^2(S^{\bad}, n) \, = \, \frac{1}{n} \int_0^1 (S^{\bad}_n)^2 \, dx 
\, \lesssim \, \delta_3,
$$ 
which shows that $S^{\bad}_n/\sqrt{n\chi}$ is small outside of a set of small measure. Therefore, the distribution of $S_n/\sqrt{n\chi}$ is roughly that of $S^{\good}_n/\sqrt{n\chi}$, which we
already know to be approximately Gaussian.
\end{proof}

The proof of Theorem \ref{complex-gaussian} is similar except one considers characteristic functions of two variables
$$
\varphi_n(s, t) =  \mathbb{E} \exp \biggl ( i \cdot \frac{s \re B + t \im B}{\sqrt{n\chi}}\biggr).
$$
and uses the approximate orthogonality  (\ref{eq:ltg3}) between $\re B$ and $\im B$ to show 
$\varphi_n(s, t) \approx \exp \Bigl (- \frac{\Sigma^2(s^2+t^2)}{4} \Bigr )$. We leave the details to the reader.

\bibliographystyle{amsplain}

\begin{thebibliography}{00}


\bibitem{AIPP}K.~Astala, O.~Ivrii, A.~Per\"al\"a, I.~Prause, {\em Asymptotic variance of the Beurling transform}, Geom. Funct. Anal. 25 (2015), no. 6,
 1647--1687.
 
\bibitem{AIM}
K.~Astala, T.~Iwaniec, G.~J.~Martin, {\em Elliptic partial differential equations and quasiconformal mappings in the plane}, Princeton University Press, 2009.
 
\bibitem{BaMo}R.~Ba\~nuelos, C.~N.~Moore, {\em Mean growth of Bloch functions and Makarov's law of the iterated logarithm}, Proc. Amer. Math. Soc. 112 (1991), 851--854.


\bibitem{BP}  J.~Becker, C.~Pommerenke, {\em On the Hausdorff dimension of quasicircles}, Ann. Acad. Sci. Fenn. Ser.
A I Math. 12 (1987), 329--333.

\bibitem{binder}I.~Binder, \textit{Harmonic measure and rotation spectra of planar domains}, preprint, 2008.

\bibitem{durrett} R.~Durrett, {\em Probability: Theory and Examples}, Cambridge Series in Statistical and Probabilistic Mathematics, 
Cambridge University Press, 4th edition, 2010.

\bibitem{GM} J.~B.~Garnett, D.~E.~Marshall, {\em Harmonic Measure}, New Mathematical Monographs, 2008.

\bibitem{hedenmalm2} H.~Hedenmalm, {\em Bloch functions and asymptotic tail variance}, preprint, 2015. \\
 \href{http://arxiv.org/abs/1509.06630}{arXiv:1509.06630}

\bibitem{hedenmalm} H.~Hedenmalm, {\em Bloch functions, asymptotic variance and zero packing}, preprint, 2016. \\
 \href{http://arxiv.org/abs/1602.03358}{arXiv:1602.03358}

\bibitem{HK}  H.~Hedenmalm, I.~R.~Kayumov, {\em On the Makarov law of the iterated
logarithm}, {Proc. Amer. Math. Soc.} {135} (2007), 
2235--2248.

\bibitem{hedenmalm-shimorin} H.~Hedenmalm, S.~Shimorin, {\em On the universal integral means spectrum of conformal mappings near the origin},
 Proc. Amer. Math. Soc. 135 (2007), 
no. 7, 2249--2255.


\bibitem{qcdim} O.~Ivrii. {\em Quasicircles of dimension $1+k^2$ do not exist}, preprint, 2016. \\
\href{http://arxiv.org/abs/1511.07240}{arXiv:1511.07240}

\bibitem{kayumov} I.~R.~Kayumov, {\em The law of the iterated logarithm for locally univalent functions}, Ann. Acad. Sci. Fenn. Math. 27 (2002), no. 2, 357--364.

\bibitem{kuhnau}
R.~K\"uhnau, 
{\it M\"oglichst konforme Spiegelung an einer Jordankurve}, Jahresber. Deutsch. Math.-Verein. 90 (1988), no. 2, 90--109. 

\bibitem{le-zinsmeister}
T.~H.~N.~Le, M. Zinsmeister, \textit{On Minkowski dimension of Jordan curves}, Ann. Acad. Sci. Fenn. Math. 39 (2014), no. 2, 787--810.

 \bibitem{makarov85} N.~G.~Makarov, {\em On the distortion of boundary sets under conformal mappings}, Proc. London Math. Soc. {\bf 51}(1985), 369--384.
 
  \bibitem{makarov87} N.~G.~Makarov, {\it Conformal mapping and Hausdorff measures},  {Ark. Mat.} {25}  (1987),  no. 1, 41--89.

\bibitem{makarov90}
N.~G.~Makarov,
{\em Probability methods in the theory of conformal mappings}, (Russian) Algebra i Analiz 1 (1989), no. 1, 3--59; translation in Leningrad Math. J. 1 (1990), no. 1, 1--56.

\bibitem{makarov99}
N.~G.~Makarov,
{\em Fine structure of harmonic measure},
{St. Petersburg Math. J.} {10} (1999), no. 2, 217--268.

\bibitem{mcmullen}
C.~T.~McMullen,
{\em Thermodynamics, dimension and the Weil-Petersson metric},
Invent. Math. 173 (2008), no. 2, 365--425.

\bibitem{nicolau}
A.~Nicolau. {\em Radial behaviour of harmonic Bloch functions and their area function}, Indiana Univ. Math. J.. 48 (1999), no. 4, 1213--1236.


\bibitem{Pomm} C.~Pommerenke, {\em Boundary behaviour of conformal maps},
 Grundlehren der Mathematischen Wissenschaften 299, Springer-Verlag, 1992.


\bibitem{PUZ2}F.~Przytycki, M.~Urba\'nski, A.~Zdunik, {\em Harmonic, Gibbs and Hausdorff measures on repellers for holomorphic maps II}, Studia Math.
  97 (1991), 189--225.

\bibitem{smirnov}S.~Smirnov, {\em Dimension of quasicircles}, Acta Math. {205} (2010), no. 1, 189--197.

\bibitem{stout}W.~Stout, {\em A martingale analogue of Kolmogorov's law of the iterated logarithm}, Z. Wahrsch. Verw. Gebiete. 15 (1970),
279--290.

\bibitem{weiss}M.~Weiss, {\em The law of the iterated logarithm for lacunary trigonometric series}, Trans. Amer. Math. Soc. 91 (1959), 444--469.
\end{thebibliography}

\end{document}